\numberwithin{equation}{section}
\newtheorem{Theorem}{Theorem}[section]
\newtheorem{Lemma}[Theorem]{Lemma}
\newtheorem{Proposition}[Theorem]{Proposition}
 { \theoremstyle{definition}

\newtheorem{Remark}[Theorem]{Remark}
}
\def\F{\mathbb F}
\def\Q{\mathbb Q}
\def\Z{\mathbb Z}
\def\mod{\ \mathrm{mod}\ }
\def\gen#1{\langle #1\rangle}
\def\({\left(}
\def\){\right)}
\def\Fx{\mathbb F^{\times}}
\def\wFx{\widehat{\mathbb F_p^{\times}}}
\def\CC#1#2{\binom {#1}{#2}}
\def\JS#1#2{\left(\frac{#1}{#2}\right)}
\def\fp{\mathfrak p}
\def\fP{\mathfrak P}
\def\ol{\overline}
\def\eps{\varepsilon}
\def\hgq#1#2#3#4{
{}_2F_1\(\begin{matrix}
 #1& #2\\
 & #3
 \end{matrix};#4\)}
\begin{document}

\allowdisplaybreaks

\newcommand{\arXivNumber}{1711.05842}

\renewcommand{\thefootnote}{}

\renewcommand{\PaperNumber}{050}

\FirstPageHeading

\ShortArticleName{Evaluation of Certain Hypergeometric Functions over Finite Fields}

\ArticleName{Evaluation of Certain Hypergeometric Functions \\ over Finite Fields\footnote{This paper is a~contribution to the Special Issue on Modular Forms and String Theory in honor of Noriko Yui. The full collection is available at \href{http://www.emis.de/journals/SIGMA/modular-forms.html}{http://www.emis.de/journals/SIGMA/modular-forms.html}}}

\Author{Fang-Ting TU~$^\dag$ and Yifan YANG~$^\ddag$}

\AuthorNameForHeading{F-T.~Tu and Y.~Yang}

\Address{$^\dag$~Department of Mathematics, 303 Lockett Hall, Louisiana State University, \\
\hphantom{$^\dag$}~Baton Rouge, LA 70803, USA}
\EmailD{\href{mailto:ftu@lsu.edu}{ftu@lsu.edu}}

\Address{$^\ddag$~Department of Mathematics, National Taiwan University and National Center\\
\hphantom{$^\ddag$}~for Theoretical Sciences, Taipei, Taiwan 10617, ROC}
\EmailD{\href{mailto:yangyifan@ntu.edu.tw}{yangyifan@ntu.edu.tw}}

\ArticleDates{Received November 17, 2017, in final form May 09, 2018; Published online May 19, 2018}

\Abstract{For an odd prime $p$, let $\phi$ denote the quadratic character of the multiplicative group $\F_p^\times$, where $\F_p$ is the finite field of $p$ elements. In this paper, we will obtain evaluations of the
hypergeometric functions $ {}_2F_1\(\begin{matrix} \phi\psi& \psi\\ & \phi \end{matrix};x\)$, $x\in \F_p$, $x\neq 0, 1$, over $\F_p$ in terms of Hecke character attached to CM elliptic curves for characters $\psi$ of $\F_p^\times$ of order $3$, $4$, $6$, $8$, and $12$.}

\Keywords{hypergeometric functions over finite fields; character sums; Hecke characters}

\Classification{11T23; 11T24; 11G05; 11G30}

\renewcommand{\thefootnote}{\arabic{footnote}}
\setcounter{footnote}{0}

\section{Introduction}
Given a prime power $q=p^k$, let $\F:=\F_q$ denote the finite field of $q$ elements and $\widehat\Fx$ be the group of multiplicative characters of~$\Fx$. For a character~$\chi$ in $\widehat\Fx$, we extend $\chi$ to a function on~$\F$ by setting $\chi(0)=0$. Following Greene~\cite{Greene}, we define the {hypergeometric function} over the finite field~$\F$ by
\begin{gather*}
 {}_{n+1}F_n\(\begin{matrix}
 A_0& A_1&\cdots&A_n\\
 & B_1&\ldots& B_n
 \end{matrix};x\) :=\frac{q}{q-1}\sum_{\chi \in \widehat{\Fx}}\CC{A_0\chi}{\chi}\CC{A_1\chi}{B_1\chi}\cdots\CC{A_n\chi}{B_n\chi}\chi(x),
\end{gather*}
where $A_0, A_1,\ldots,A_n$ and $B_1, \ldots, B_n$ are multiplicative characters on~$\F$, and
\begin{gather*}
 \CC AB:=\frac{B(-1)}q J(A,\overline B).
\end{gather*}
Here $J(A,B)$ is the Jacobi sum defined by
\begin{gather*}
 J(A,B)=\sum_{x\in \F}A(x)B(1-x).
\end{gather*}
Equivalently, the hypergeometric functions can be defined inductively by
\begin{gather*}
 {}_{n+1}F_n \(\begin{matrix}
 A_0& A_1&\cdots&A_n\\
 & B_1&\ldots& B_n
 \end{matrix};x\)
 :=\sum_{y\in\F}A_n(y)B_n\overline A_n(1-y) {}_{n}F_{n-1}\(\begin{matrix}
 A_0& A_1&\cdots&A_{n-1}\\
 & B_1&\ldots& B_{n-1}
 \end{matrix};xy\)
\end{gather*}
with
\begin{gather*}
 {}_2F_1\(\begin{matrix}
 A_0& A_1\\
 & B_1
 \end{matrix};x\):= \eps(x)\frac{A_1B_1(-1)}{q}\sum_{y\in \F}A_1(y)B_1\overline A_1(1-y)\overline{A_0}(1-xy),
\end{gather*}
where $\eps$ is the trivial character on $\F^\times$. Being finite fields analogue of the classical hypergeometric functions, the hypergeometric functions over finite fields have close connection to arithmetic geometry and can be used to compute zeta functions of algebraic varieties of hypergeometric type. The cases where the parameters $A_i$ are all the quadratic character $\phi$ and $B_j$ are all the trivial character $\eps$ have particularly drawn a great deal of attention.

Assume that $q=p$ is an odd prime. For convenience, we denote by ${}_{n+1}F_n(x)$ the hypergeometric functions with $A_i=\phi$ and $B_j=\eps$ for all $i$ and $j$. The evaluations of ${}_2F_1(x)$, ${}_3F_2(x)$ at certain rational numbers have been studied by numerous mathematicians such as Barman--Kalita~\cite{BK-certain, BK-elliptic}, Evans--Lam~\cite{Evans-Lam}, Greene--Stanton~\cite{Greene-Stanton}, Koike~\cite{Koike-Aperynumbers, Koike-ellipticcurves}, and Ono~\cite{Ono}. They gave explicit relationship between values of these hypergeometric functions and arithmetic of
elliptic curves. In some cases, the evaluations also have connection with congruence properties of the Ap\'{e}ry numbers. In~\cite{Ahlgren-Ono-hypergeometric, Ahlgren-Ono-CalabiYau}, Ahlgren--Ono expressed the special value ${}_4F_3(1)$ in terms of Fourier coefficients of the Hecke eigenform associated to a certain modular Calabi--Yau manifold. In~\cite{AOP}, Ahlgren, Ono, and Penniston studied the zeta functions of a certain family of $K3$-surfaces, which lead to information about the special values of $_3F_2(x)$ at $x=1, 8, 1/8, -4, -1/4, 64$, and $-1/64$ in terms of the trace of Frobenius on suitable elliptic curves with complex multiplication over~$\Q$~\cite{win3c}. In other words, these values of the hypergeometric functions can also be expressed in terms of Hecke characters on imaginary quadratic number fields.
In \cite{Evans-Greene-Clausentheorem,Evans-Greene-Evaluations}, Evans and Greene obtained evaluations of ${}_3F_2$-hypergeometric functions with more general characters at $x=1/4, -1/8, -1$, etc. Key ingredients in their approach are some transformation formulas of hypergeometric functions over finite fields, such as a finite field analogue
of Clausen's theorem. In recent years, Frechette, Fuselier, Goodson, Lennon, Ono, Papanikolas, and Salerno \cite{Frechette-Ono-Papanikolas, Fuselier-PhD, Fuselier,
 Fuselier-level1, Goodson, Lennon-elliptic, Lennon-modularity, Salerno} investigated
connections between hypergeometric functions over finite fields and elliptic curves, algebraic varieties, and Hecke eigenforms. Very recently, McCarthy--Papanikolas~\cite{McCarthy-Papanikolas} linked the hypergeometric functions to Siegel modular forms.

The focus of this paper will be the evaluations of the hypergeometric function
\begin{gather} \label{equation: intro 0}
 {}_2F_1\(\begin{matrix}
 \phi\psi& \psi\\
 & \phi
 \end{matrix};x\), \qquad x\in \F_p, \qquad x\neq 0, 1,
\end{gather}
for a character $\psi$ of $\widehat{\F^\times_p}$. We will see that if $\psi$ has order~$N$, then the values of this hypergeometric function are related to the number of rational points on a certain superelliptic curve of degree~$N$ or~$2N$, depending on whether $N$ is even or odd, over~$\F_p$. In the case of $N=3,4,6,8,12$, we can explicitly construct morphisms from the superelliptic curve to several elliptic curves over~$\Q$ with complex multiplication. Since the $L$-functions of elliptic curves over~$\Q$ with complex multiplication are known to be equal to the $L$-functions of Hecke characters of the CM field, the values of the hypergeometric function can be expressed in terms of the Hecke characters.

\section{Main statements} \label{sec: Theorem}

To state our main results, we first recall that if $N$ is a positive integer and $p$ is a prime such that $p\equiv 1\mod N$, then $p$ splits completely in $\Q\big(e^{2\pi i/N}\big)$. In other words, if $\fp$ is a~prime of $\Q\big(e^{2\pi i/N}\big)$ lying above~$p$, then $\Z\big[e^{2\pi i/N}\big]/\fp\simeq \F_p$. The integers $0,1,\ldots,p-1$ form a complete set of coset representatives of~$\fp$ in~$\Z\big[e^{2\pi i/N}\big]$. Thus, for a given element~$a$ in~$\F_p$, we may canonically regard it as an element of $\Z\big[e^{2\pi i/N}\big]/\fp$. We now describe our main results.

\begin{Theorem} \label{theorem: N=4} Let $p$ be a prime congruent to~$1$ modulo~$4$, and~$\mathfrak{p}$ be a prime of~$\Z[i]$ lying above~$p$. Let $\psi_{\mathfrak p}$ be the quartic multiplicative character of $(\Z[i]/\fp)^\times\simeq\F_p^\times$ satisfying
 \smash{$\psi_{\mathfrak p}(x)\equiv x^{(p-1)/4}$} $\mod \mathfrak{p}$, for every $x\in \Z[i]$. Then, for $a\in\F_p$ with $a\neq 0, 1$, if one of~$a$ and $1-a$ is not a~square in~$\F_p^\times$, we have
 \begin{gather*}
 {}_2F_1\(\begin{matrix}
 \ol{\psi_\mathfrak p}& {\psi_\mathfrak p}\\
 & \phi
 \end{matrix};a\)=0,
 \end{gather*}
 and if $a=b^2$ for some $b\in\F_p^\times$, then
 \begin{gather*}{}_2F_1\(\begin{matrix}
 \ol{\psi_\mathfrak p}& {\psi_\mathfrak p}\\
 & \phi
 \end{matrix};a\)= -\frac{2}p\phi\(1+b\)\chi({\mathfrak{p}}),
 \end{gather*}
where $\chi$ is the Hecke character associated to the elliptic curve $E\colon y^2=x^3-x$ satisfying $\chi(\mathfrak{p})\in\mathfrak{p}$ for all primes $\mathfrak{p}$ of~$\Z[i]$.
\end{Theorem}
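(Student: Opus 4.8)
The plan is to reduce the hypergeometric function to a character sum that counts points on a superelliptic curve, then identify that curve (up to birational equivalence) with the CM elliptic curve $E\colon y^2 = x^3 - x$ whose $L$-function is given by a Hecke character. Concretely, I would first use the inductive/integral definition of ${}_2F_1$ quoted in the introduction to write
\begin{gather*}
 {}_2F_1\(\begin{matrix}
 \ol{\psi_\fp}& \psi_\fp\\
 & \phi
 \end{matrix};a\)
 = \frac{\psi_\fp\phi(-1)}{p}\sum_{y\in\F_p}\psi_\fp(y)\,\phi\ol{\psi_\fp}(1-y)\,\psi_\fp(1-ay),
\end{gather*}
after simplifying $\ol{\ol{\psi_\fp}} = \psi_\fp$ and $B_1\ol A_1 = \phi\ol{\psi_\fp}$. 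Next I would convert this single sum into a two-variable sum by introducing, for each of the factors $\psi_\fp(\cdot)$ and the factor $\phi\ol{\psi_\fp}(\cdot) = \phi(\cdot)\ol{\psi_\fp}(\cdot)$, the standard device of replacing a value of a quartic character $\psi_\fp(t)$ by counting solutions of $u^4 = t$, or equivalently summing $\psi_\fp^j(t)$ over $j$; since $\psi_\fp$ has order $4$, the resulting object is a sum over the affine points of a curve $C$ of the shape $v^4 = y(1-y)(1-ay)$ (with appropriate twisting by the quadratic character absorbed into the equation), i.e.\ a superelliptic curve of degree $N=4$. The factor $\frac{1}{p}$ out front and the Weil-type bounds mean the sum is, up to an explicit elementary prefactor, the negative of the trace of Frobenius on the Jacobian of $C$.

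The second main step is the explicit geometry: realize $C\colon v^4 = y(1-y)(1-ay)$ (or its smooth projective model) as a curve admitting dominant maps to elliptic curves with CM by $\Z[i]$. Here the hypothesis $a = b^2$ is exactly what is needed: substituting $y \mapsto$ a function of $b$ and $v$, one checks that $C$ becomes birational (over $\F_p$, indeed over $\Q(b)$) to a curve of the form $w^2 = $ (quartic in $z$) that is a quadratic twist of $E\colon y^2 = x^3 - x$. The twisting character works out to $\phi(1+b)$ — this is where the factor $\phi(1+b)$ in the statement comes from — and the CM structure forces the relevant Jacobi sums to lie in $\Z[i]$, matching them to $\chi(\fp)$ where $\chi$ is the Hecke character normalized by $\chi(\fp)\in\fp$. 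For the vanishing case, when neither $a$ nor $1-a$ is a square, the curve $C$ has no points in a way that makes the relevant character sum collapse: more precisely the inner sum over $j$ of $\psi_\fp^j$ annihilates every term because the argument is forced to be a non-square while one is summing an odd combination of characters, so the total is $0$; alternatively, the associated twist of $E$ is by a character that makes the trace vanish for parity reasons.

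I would carry out the steps in this order: (1) unwind the ${}_2F_1$ into a Jacobi-sum / single character sum; (2) diagonalize into a point count on $v^4 = y(1-y)(1-ay)$; (3) when $a$ is not of the form $b^2$ with the square conditions failing, show the sum vanishes by orthogonality; (4) when $a = b^2$, perform the explicit change of variables exhibiting the map $C \to E^{(d)}$ to the quadratic twist $E^{(d)}$ with $d$ chosen so that the twisting character is $\phi(1+b)$; (5) invoke the equality of the $L$-function of $E^{(d)}/\Q$ with that of the twisted Hecke character, so that the trace of Frobenius at $\fp$ equals $\phi(1+b)\,\chi(\fp) + \phi(1+b)\,\ol{\chi(\fp)}$ or the appropriate single-term version after matching real/imaginary parts, and track the elementary prefactor $-2/p$. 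The main obstacle I expect is step (4): finding the correct birational transformation from the degree-$4$ superelliptic curve to a model of a twist of $y^2 = x^3 - x$, and in particular pinning down the exact quadratic twist so that the twisting character is precisely $\phi(1+b)$ rather than $\phi$ of some other rational function of $b$; this requires a careful, essentially bare-hands manipulation of the defining equations and close bookkeeping of signs and of the normalization of $\psi_\fp$ modulo $\fp$. A secondary subtlety is ensuring the identification of the Jacobi sum $J(\psi_\fp, \psi_\fp)$ (or $J(\psi_\fp, \phi)$) with $\chi(\fp)$ under the stated normalization $\chi(\fp)\in\fp$, which amounts to the classical fact that these quartic Gauss/Jacobi sums generate $\fp$ and have the right absolute value, together with a congruence condition fixing the unit.
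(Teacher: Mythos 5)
Your outline follows the paper's broad route (unwind the ${}_2F_1$ into a character sum, interpret it through a degree-$4$ superelliptic curve, and evaluate via CM elliptic curves and Jacobi sums), but two load-bearing steps are gapped. The central one is the separation of the $\psi_\fp$-component from its conjugate. After unwinding, the hypergeometric value is, up to the prefactor and the factor $\ol\psi_\fp(1-a)^2$, the single sum $S_{\psi_\fp}(a)=\sum_x\psi_\fp(x(x-1)(x-a))$, whereas the Frobenius trace of the Jacobian of your curve $v^4=x(x-1)(x-a)$ is $-\big(S_{\psi_\fp}(a)+S_{\ol\psi_\fp}(a)+\sum_x\phi(x(x-1)(x-a))\big)$: the genus-$3$ Jacobian splits into the Legendre curve and the two quartic twists $cy^2=x^3+c(1\pm b)^2x$, so the curve is not birational to a single quadratic twist of $E$, and its point count only yields $S_{\psi_\fp}+S_{\ol\psi_\fp}$. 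This matters because the target value $-\tfrac2p\phi(1+b)\chi(\fp)$ is a single, generally non-real, Hecke character value, while traces of Frobenius of curves over $\Q$ give the rational combinations $\chi(\fp)+\ol\chi(\fp)$; no point count of one $\Q$-curve determines $S_{\psi_\fp}$ alone, and "matching real/imaginary parts" in your step (5) is exactly the unproved point. The paper supplies the missing mechanism by working with the twisted family $C_{4,a,c}\colon y^4=cx(x-1)(x-a)$, showing the three elliptic quotients are pairwise non-isogenous for suitable $b$, $c$ (Lemma~\ref{lemma: same L 4} plus a CM/$j$-invariant argument), evaluating $\eta(c)S_\eta(b^2)+\ol\eta(c)S_{\ol\eta}(b^2)$ by Lemma~\ref{lemma: DNc}, and then choosing two values of $c$ (with $\eta(c)=1$ and $\eta(c)=i$) to solve the linear system for $S_\eta(b^2)$. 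Relatedly, $\phi(1+b)$ is not a quadratic-twist character of a model of $E$: it emerges from the Jacobi-sum evaluation of the two quartic twists combined with $\phi(1+b)\phi(1-b)=\phi(1-a)=1$, an identity that uses the hypothesis that $1-a$ is also a square, which your step (4) never invokes.

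The vanishing case is also mishandled. The theorem requires vanishing as soon as \emph{one} of $a$, $1-a$ is a nonsquare, and the two subcases need different arguments: if $\phi(a)=-1$, the substitution $\chi\mapsto\phi\chi$ in the character-sum definition gives ${}_2F_1=\phi(a)\,{}_2F_1$ as in \eqref{eqn: trace}; if $a$ is a square but $\phi(1-a)=-1$, one needs the functional equation of Proposition~\ref{prop: eta-phieta}, which for $\eta=\psi_\fp$ (where $\phi\ol\psi_\fp=\psi_\fp$, since $\phi=\psi_\fp^2$) reads $F_{\psi_\fp}(a)=\phi(1-a)F_{\psi_\fp}(a)$ and forces $F_{\psi_\fp}(a)=0$. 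Your argument treats only the case where neither is a square and rests on a vague orthogonality/no-points claim that establishes neither subcase.
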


\begin{Remark} The Hecke character $\chi$ in the theorem has the following description. Suppose that the prime ideal $\fp$ is generated by $a+bi$, for some integers~$a$ and~$b$. There exists a unique fourth root $i^k$ of unity such that $a+bi\equiv i^k\mod (1+i)^3$. Then
 \begin{gather*}
 \chi(\fp)=i^k(a+bi).
 \end{gather*}
 Alternatively, we can write $\chi$ as
 \begin{gather*}
 \chi(\mathfrak p)=\begin{cases}
\displaystyle (-1)^{b/2}\JS{-1}a(a+bi),
 &\text{if} \ a \ \text{is odd and} \ b \ \text{is even}, \\
\displaystyle i(-1)^{a/2}\JS{-1}b(a+bi),
 &\text{if}\ a \ \text{is even and}\ b \ \text{is odd}. \end{cases}
 \end{gather*}
\end{Remark}

\begin{Theorem} \label{theorem: N=8}
Let $p$ be a prime congruent to $1$ modulo $8$, $\fP$ a prime of $\Z\big[\sqrt{-2}\big]$ lying above~$p$, and $\fp$ a prime of $\Z\big[e^{2\pi i/8}\big]$ lying above $\fP$. Let $\psi_\fp$ be the character of $\big(\Z\big[e^{2\pi i/8}\big]/\fp\big)^\times\simeq\F_p^\times$ of order~$8$ such that $x^{(p-1)/8}\equiv\psi_\fp(x)\mod\fp$ for all $x\in\Z[\zeta_8]$. Let $\chi$ be the Hecke character associated to the elliptic curve $y^2=x^3+4x^2+2x$ with CM by $\Z\big[\sqrt{-2}\big]$ satisfying $\chi(\fP')\in\fP'$ for all primes~$\fP'$ of~$\Z\big[\sqrt{-2}\big]$. If $a\in\F_p^\times$ is not a square in~$\F_p^\times$, then
 \begin{gather*}
 {}_2F_1\(\begin{matrix}
 \phi\psi_\fp& \psi_\fp\\
 & \phi
 \end{matrix};a\)=0,
 \end{gather*}
 and if $a=b^2$ for some $b\in\F_p^\times$, then
 \begin{gather*}
 {}_2F_1\(\begin{matrix}
 \phi\psi_\fp& \psi_\fp\\
 & \phi
 \end{matrix};a\)
 =-\frac1p(-1)^{(p-1)/8}\big(\ol\psi_\fp^2(1+b) +\ol\psi_\fp^2(1-b)\big)\chi(\fP).
 \end{gather*}
\end{Theorem}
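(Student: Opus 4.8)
\emph{Reduction to a character sum.} Since $\psi_\fp$ has order $8$, the character $\psi_\fp^4$ is the unique quadratic character, so $\phi=\psi_\fp^4$; hence $\phi\ol{\psi_\fp}=\psi_\fp^3$ and $\ol{\phi\psi_\fp}=\psi_\fp^{-5}=\psi_\fp^3$, while $A_1B_1(-1)=\psi_\fp(-1)\phi(-1)=(-1)^{(p-1)/8}$ because $p\equiv 1\pmod 8$. Plugging $A_0=\phi\psi_\fp$, $A_1=\psi_\fp$, $B_1=\phi$, $x=a$ into the integral formula for ${}_2F_1$ gives
\begin{gather*}
\hgq{\phi\psi_\fp}{\psi_\fp}{\phi}{a}=\frac{(-1)^{(p-1)/8}}{p}\,S,\qquad S:=\sum_{y\in\F_p}\psi_\fp\big(y(1-y)^3(1-ay)^3\big).
\end{gather*}
Thus it remains to prove that $S=0$ when $a$ is a non-square, and $S=-\big(\ol\psi_\fp^2(1+b)+\ol\psi_\fp^2(1-b)\big)\chi(\fP)$ when $a=b^2$.

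\emph{The vanishing case.} Here I would use that $y\mapsto 1/(ay)$ is an involution of $\F_p^\times$, that the terms with $y=0$ do not contribute, and that under this substitution $y(1-y)^3(1-ay)^3$ gets multiplied by $a^{-4}y^{-8}$ (the signs $(-1)^6$ being killed by $\psi_\fp$). Since $\psi_\fp(a^{-4})=\phi(a)$ and $\psi_\fp(y^{-8})=1$, this gives $S=\phi(a)S$, hence $S=0$ whenever $\phi(a)=-1$. Unlike the $N=4$ case there is no second involution interchanging $\{0,\infty\}$ with $\{1,1/a\}$, because these branch points carry different exponents; this explains why only the square/non-square status of $a$ (and not also that of $1-a$) intervenes.

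\emph{The case $a=b^2$.} Writing $1-ay=(1-by)(1+by)$, the sum $S$ together with its conjugates under $\psi_\fp\mapsto\psi_\fp^{\,j}$ encodes the number of $\F_p$-points of the degree-$8$ superelliptic curve $C_b\colon z^8=y(1-y)^3(1-ay)^3$; more precisely, with the $\mu_8$-action $z\mapsto\zeta z$, the quantity $S$ is minus the trace of Frobenius on the (two-dimensional) $\psi_\fp$-isotypic part of $H^1(C_b)$. The plan is to realise this piece geometrically: exhibit explicit non-constant morphisms over $\Q$ (or over a small abelian extension) from $C_b$ to the elliptic curve $E\colon y^2=x^3+4x^2+2x$ and to a companion/twist of it, arranged so that $\mu_8$ acts compatibly and so that, up to isogeny, $\Jac(C_b)$ acquires a factor built out of $E$. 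The quadratic factorization $(1-by)(1+by)$ and the dependence of the model on $b$ rather than only on $a=b^2$ are exactly what should manufacture the values $\ol\psi_\fp^2(1\pm b)$ as the two twisting constants in front of the Frobenius eigenvalue of $E$. Since $E$ has complex multiplication by $\Z[\sqrt{-2}]$, we have $L(E/\Q,s)=L(\chi,s)$ for the Hecke character normalized by $\chi(\fP')\in\fP'$, so that eigenvalue is $\chi(\fP)$; tracking the remaining units and signs via an explicit description of $\chi$ (of the kind in the Remark following Theorem~\ref{theorem: N=4}) then yields $S=-\big(\ol\psi_\fp^2(1+b)+\ol\psi_\fp^2(1-b)\big)\chi(\fP)$, and combining with the first paragraph gives the theorem.

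\emph{The main obstacle.} The delicate step is the explicit geometry in the previous paragraph: constructing the morphisms $C_b\to E$ and verifying that they identify the $\psi_\fp$-isotypic piece of $H^1(C_b)$ with (twists of) the Tate module of $E$ carrying \emph{precisely} the twisting characters $\ol\psi_\fp^2(1\pm b)$ and the correct sign, rather than merely up to an eighth root of unity. Concretely this is the bookkeeping that turns a length-eight Jacobi-sum-type character sum into a single CM Hecke value, and getting the normalization exactly right is where one must exploit a careful model of $E$ together with its $2$-isogeny with $\Q$-rational kernel and the tower $\Q(\zeta_8)\supset\Q(\sqrt{-2})$.
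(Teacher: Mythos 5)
Your reduction and your vanishing argument are fine: the sum $S=\sum_y\psi_\fp\big(y(1-y)^3(1-ay)^3\big)$ is exactly the paper's $F_{\psi_\fp}(a)$ after the substitution $y\mapsto 1/x$, your constant $(-1)^{(p-1)/8}$ is correct, and the involution $y\mapsto 1/(ay)$ does give $S=\phi(a)S$, which is the same vanishing statement the paper obtains from the substitution $\chi\mapsto\phi\chi$ in \eqref{eqn: trace}. But for $a=b^2$ your proposal stops precisely where the theorem lives, and there is a genuine gap, in two places. First, counting points of the degree-$8$ superelliptic curve over $\F_p$ only yields the aggregate $\sum_{j}S_{\psi_\fp^j}$ (the full trace of Frobenius on $H^1$), not the eigenvalue on the single $\psi_\fp$-isotypic piece, and saying that $\mu_8$ acts compatibly does not tell you which eigenspace carries $\chi(\fP)$ versus $\chi(\ol\fP)$, nor where the constants $\ol\psi_\fp^2(1\pm b)$ sit. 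The paper supplies a concrete mechanism for this separation: it works with the twisted family $y^8=cx^3(x-1)(x-a)$, shows via Lemmas~\ref{lemma: same L 4} and~\ref{lemma: same L 8} that for suitable $b$, $c$ the five quotient curves (three elliptic curves and the two genus-two curves $cy^2=x^5+c(1\pm b)^2x$) have pairwise distinct $L$-functions, so the genus-$7$ Jacobian decomposes and Lemma~\ref{lemma: N=8 main lemma} gives one linear relation among $S_{\psi_\fp^j}(b^2)$, $j=1,3,5,7$, for each $c$; running $c=1,g,g^2,g^3$ and inverting the $4\times4$ system isolates $S_{\psi_\fp}(b^2)=-\big(\psi_\fp^2(1+b)+\psi_\fp^2(1-b)\big)\chi(\fP)$. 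Your sketch has no substitute for this step.

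Second, and more fundamentally, the exact normalization you defer to the "main obstacle" paragraph is not bookkeeping; it is the arithmetic heart of the proof. The paper's Lemma~\ref{lemma: L of D81} proves $J(\phi,\psi_\fp)=J\big(\phi,\psi_\fp^3\big)=-\psi_\fp(-1)\chi(\fP)$ and $J\big(\phi,\psi_\fp^5\big)=J\big(\phi,\psi_\fp^7\big)=-\psi_\fp(-1)\chi\big(\ol\fP\big)$, and to do so it needs the nontrivial factorization $L(D_{8,1,1}/\Q,s)=L(E_1/\Q,s)L(E_2/\Q,s)$ for $D_{8,1,1}\colon y^2=x^5+x$ (quoted from Hashimoto--Long--Yang), Stickelberger's theorem to locate the prime factorization of the Jacobi sums, the Galois action \eqref{equation: Galois on J} to cut the ambient root of unity down to $\pm1$, and a congruence-plus-absolute-value argument in $\fP\ol\fP=p\Z\big[\sqrt{-2}\big]$ to remove the last sign ambiguity; the point counts themselves enter through Lemma~\ref{lemma: DNc}. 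Determining the value only "up to an eighth root of unity," which is what your geometric outline would give without this input, is strictly weaker than the theorem. So as written your argument proves the vanishing half and correctly identifies the strategy (decompose the Jacobian, use CM to convert Frobenius into $\chi(\fP)$), but the evaluation half is a plan rather than a proof: to complete it you must supply the Jacobi-sum identity above (or an equivalent exact identification of Frobenius on the $\psi_\fp$-eigenspace) and a device, such as the paper's twisting in $c$, that extracts the single sum $S_{\psi_\fp}$ from aggregate point counts.
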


\begin{Remark} The character $\chi$ in the above theorem can be explicitly described as follows. If a~prime~$\fP'$ of~$\Z\big[\sqrt{-2}\big]$ lying above an odd prime is generated by
 $c+d\sqrt{-2}$, then
 \begin{gather*}
 \chi(\fP')=\left(\frac{-2}c\right)\big(c+d\sqrt{-2}\big)\cdot
 \begin{cases}
 (-1)^{d/2}, &\text{if} \ d \ \text{is even}, \\
 -1, &\text{if} \ d \ \text{is odd}. \end{cases}
 \end{gather*}
\end{Remark}

\begin{Theorem} \label{theorem: N=3,6} Let $p$ be a prime congruent to $1$ modulo $6$, and~$\mathfrak{p}$ a~prime ideal of $\Z[\zeta_6]$ \smash{lying} above~$p$. Let $\psi_{\mathfrak p}$ be the multiplicative character of order~$6$ on $(\Z[\zeta_6]/\fp)^\times\simeq\F_p^\times $ satisfying \smash{$ \psi_{\mathfrak p}(x)\equiv x^{(p-1)/6}$} $\mod \mathfrak{p}$, for every $x\in \Z[\zeta_6]$. Let $\chi$ be the Hecke character associated to the elliptic curve $E\colon y^2=x^3+1$ satisfying $\chi(\mathfrak{p})\in\mathfrak{p}$ for all primes $\mathfrak{p}$ of~$Z[\zeta_6]$. If $a\neq 1$ is not a~square in~$\F_p^\times$, we have
 \begin{gather*}
 {}_2F_1\(\begin{matrix}
 {\phi\psi_\mathfrak p^2}& \psi_\mathfrak p^2\\
 & \phi
 \end{matrix};a\)= {}_2F_1\(\begin{matrix}
 {\phi\psi_\mathfrak p}& \psi_\mathfrak p\\
 & \phi
 \end{matrix};a\)=0
 \end{gather*}
 and if $a=b^2$ is a square in $\F_p^\times$, then
 \begin{gather*}
 {}_2F_1\(\begin{matrix}
 {\phi\psi_\mathfrak p^2}& \psi_\mathfrak p^2\\
 & \phi
 \end{matrix};a\)= -\frac{\phi(-1)}p
 \big(\psi_\fp^2(1+b)+\psi_\fp^2(1-b)\big)\chi(\fp),\\
 {}_2F_1\(\begin{matrix}
 {\phi\psi_\mathfrak p}& \psi_\mathfrak p\\
 & \phi
 \end{matrix};a\)= -\frac{1}{p}
 \big(\ol\psi_\fp^2(1+b)+\ol\psi_\fp^2(1-b)\big)\chi(\fp).
 \end{gather*}
\end{Theorem}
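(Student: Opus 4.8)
The plan is to collapse both hypergeometric values to a single twisted Jacobi sum, to use whether $a$ is a square in order to control the $2$-to-$1$ fibration underlying the associated superelliptic curve, and then to identify the surviving sum with a Frobenius eigenvalue on a sextic twist of $E\colon y^2=x^3+1$.

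First I would unfold the definition of ${}_2F_1$. Writing $\psi=\psi_\fp^i$ with $i\in\{1,2\}$ and using $\overline{\phi\psi}=\phi\overline\psi$ and $\overline\phi=\phi$, one gets for $a\neq 0,1$ that
\begin{gather*}
{}_2F_1\(\begin{matrix}\phi\psi_\fp^i& \psi_\fp^i\\ & \phi\end{matrix};a\)=\frac{\psi_\fp^i(-1)\phi(-1)}{p}\,S_i,\\
S_i:=\sum_{y\in\F_p}\psi_\fp^i(y)\,\psi_\fp^{3-i}(1-y)\,\psi_\fp^{3-i}(1-ay),
\end{gather*}
because $\phi\overline{\psi_\fp^i}=\psi_\fp^{3}\psi_\fp^{-i}=\psi_\fp^{3-i}$ (here $\phi=\psi_\fp^3$ since $\psi_\fp$ has order~$6$). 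Since $\psi_\fp^i\psi_\fp^{3-i}=\psi_\fp^3=\phi$, I may rewrite $\psi_\fp^i(y)=\phi(y)\,\overline{\psi_\fp^{\,3-i}}(y)$, so $S_i=\sum_y\phi(y)\,\psi_\fp^{3-i}\big((1-y)(1-ay)/y\big)$; equivalently, $S_i$ is one of the character sums into which $\#C_a(\F_p)$ decomposes, where $C_a\colon w^6=y^i(1-y)^{3-i}(1-ay)^{3-i}$ is a superelliptic curve of degree $N$ or $2N$, $N$ being the order of $\psi_\fp^i$.

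Next I would analyse the degree-$2$ map $y\mapsto u:=(1-y)(1-ay)/y$ on $\mathbb P^1_{\F_p}$: a generic fibre is a pair $\{y_1,y_2\}$ with $y_1y_2=1/a$, while $\psi_\fp^{3-i}(u)$ is constant on fibres. If $a$ is not a square, then $\phi(y_1)\phi(y_2)=\phi(1/a)=\phi(a)=-1$, so $\phi(y_1)=-\phi(y_2)$, the two points of every fibre cancel, and $S_i=0$ — this is the vanishing assertion. If $a=b^2$, then $\phi(y_1)=\phi(y_2)$ throughout, and after the substitution $y\mapsto y/b^2$ the map becomes $u=y+b^2/y-(1+b^2)$, invariant under the involution $y\mapsto b^2/y$, with critical values $-(1-b)^2$ at $y=b$ (of weight $\phi(b)$) and $-(1+b)^2$ at $y=-b$ (of weight $\phi(-b)$). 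The crucial observation is that $\psi_\fp^{3-i}\big(-(1\mp b)^2\big)=\psi_\fp^{3-i}(-1)\,\psi_\fp^{2(3-i)}(1\mp b)$ with $2(3-i)\equiv-2i\pmod 6$, which equals $4$ if $i=1$ and $2$ if $i=2$: these two ramified fibres are the source of the characters $\overline{\psi_\fp}^{\,2}(1\pm b)$ (case $i=1$) and $\psi_\fp^2(1\pm b)$ (case $i=2$) appearing in the statement.

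The rest is geometric. Pushing the sum forward to the quotient line, or pulling back along $v^2=y$, realises $S_i$, up to elementary boundary contributions, as a character sum on a curve of small genus carrying a $\mu_6$-action; I would then exhibit explicit morphisms from this curve to the sextic twists $y^2=x^3+d$ of $E$, with $d$ in the two classes $c(1+b)^2$ and $c(1-b)^2$ for an explicit constant $c$, and verify that the relevant $\psi_\fp^{3-i}$-isotypic piece of its first cohomology is isogenous to the sum of the corresponding pieces of these twists. Finally one invokes the classical point count for $E_d\colon y^2=x^3+d$ over $\F_p$, $p\equiv1\pmod 6$: the relevant Frobenius eigenvalue is $\phi(d)\,\psi_\fp^2(d)\,J(\psi_\fp^2,\phi)$, and $J(\psi_\fp^2,\phi)$ equals, up to the root of unity fixed by the normalization $\chi(\fp)\in\fp$, the Hecke character value $\chi(\fp)$ of $E=E_1$ (Weil's realisation of Jacobi sums as Hecke characters). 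Combining the two eigenvalues, using $\psi_\fp^2(-1)=1$ (as $-1=(-1)^3$ is always a cube) together with the prefactor $\psi_\fp^i(-1)\phi(-1)$, and dividing by $p$, then yields the two displayed formulas. The hardest part will be this explicit geometry together with the attendant bookkeeping — writing the maps onto the curves $y^2=x^3+d$ and checking they induce the asserted isogeny on the $\psi_\fp^{3-i}$-isotypic piece (equivalently, the character-sum manipulation collapsing the ``main'' part of $S_i$ to the single Jacobi sum $J(\psi_\fp^2,\phi)$ representing $\chi(\fp)$), determining $c$ and all intervening signs and roots of unity, and ensuring that no $O(1)$ contribution from the degenerate fibres (at $y=0,1,1/a$, at the two critical values, and at the points at infinity) is mishandled.
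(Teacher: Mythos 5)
Your reduction is set up correctly and your overall strategy is the same one the paper uses in spirit: unfold the ${}_2F_1$ into the sum $S_i=\sum_y\phi(y)\psi_\fp^{3-i}\big((1-y)(1-ay)/y\big)$, prove vanishing for nonsquare $a$ (your fibre-cancellation argument via $y_1y_2=1/a$ is a valid alternative to the paper's one-line argument that $\hgq{\phi\eta}{\eta}{\phi}{a}=\phi(a)\hgq{\phi\eta}{\eta}{\phi}{a}$), and, for $a=b^2$, relate the sum to Frobenius on quotient curves with $j=0$ and invoke Weil's ``Jacobi sums as Hecke characters''. Your ramification analysis also correctly predicts where the factors $\psi_\fp^{\pm2}(1\pm b)$ come from.

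The genuine gap is that the proof stops exactly where the content of the theorem lies, and you say so yourself: the ``explicit geometry together with the attendant bookkeeping'' is not a residue of routine checking but the actual theorem. Concretely, two things are missing. First, the exact identity $S_{\psi_\fp}\big(b^2\big)=\psi_\fp(-1)\big(\psi_\fp^2(1+b)+\psi_\fp^2(1-b)\big)J(\phi,\psi_\fp)$ is never established: knowing that the ramified fibres at $u=-(1\pm b)^2$ ``are the source'' of the factors $(1\pm b)$ does not produce the identity, and the isotypic/isogeny route you sketch still requires either an honest eigenspace computation or some device to separate the contribution of $\psi_\fp$ from that of $\ol\psi_\fp$ (the paper does this elementarily: it counts points on $C_{6,a,c}\colon y^6=cx^2(x-1)(x-a)$ and its quotients $C_{3,a,c}$ and $cy^2=x^4+c(1\pm b)^2x$, evaluates the $\phi$-term $S_\phi(a)=-2$, uses the Berndt--Evans--Williams count (Lemma~3.6) to express the elliptic traces through $J\big(\phi,\psi_\fp^k\big)$, and then varies the twist $c$ over two values with $\psi_\fp(c)=1$ and $\psi_\fp(c)=e^{2\pi i/3}$ to solve for $S_{\psi_\fp}(a)$). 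Second, you leave the identification $J(\psi_\fp^2,\phi)=-\chi(\fp)$ ``up to the root of unity fixed by the normalization'': but Stickelberger plus $|J|=\sqrt p$ only pins the Jacobi sum down to a unit of $\Z[\zeta_6]$, and determining that this unit is exactly $-1$ under the normalizations $\psi_\fp(x)\equiv x^{(p-1)/6}\bmod\fp$ and $\chi(\fp)\in\fp$ is a separate arithmetic input (the paper uses $\psi_\fp(-1)J(\phi,\psi_\fp)=J\big(\phi,\psi_\fp^2\big)=-\chi(\fp)$, citing Berndt--Evans--Williams; in the analogous $N=8$ and $N=12$ cases it has to work to fix such a unit). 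Since the theorem asserts an exact evaluation with a specific sign and a specific Hecke character, deferring both the separation step and the root-of-unity determination leaves the proof incomplete.
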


\begin{Remark} The Hecke character $\chi$ can be explicitly given as follows. Suppose that $\fp$ is generated by $\alpha\in\Z[\zeta_6]$. There exists a unique sixth root $\zeta_6^k$ of unity such that $\alpha\equiv\zeta_6^k\mod 2\sqrt{-3}$. Then we have
 \begin{gather*}
 \chi(\fp)=\zeta_6^k\alpha.
 \end{gather*}
\end{Remark}

\begin{Theorem} \label{theorem: N=12} Let $p$ be a prime congruent to $1$ modulo $12$, $\fP$ a prime of $\Z[i]$ lying above $p$, and $\fp$ a prime of $\Z\big[e^{2\pi i/12}\big]$ lying above $\fP$. Let $\psi_\fp$ be the character of order $12$ in $\widehat{\F_p^\times}$ such that $x^{(p-1)/12}\equiv\psi_\fp(x)\mod\fp$ for all $x\in\Z[\zeta_{12}]$. Let~$\chi$ be the Hecke character associated to the elliptic curve $y^2=x^3-3x$ satisfying $\chi(\fP')\in\fP'$ for all primes~$\fP'$ of~$\Z[i]$. If $a\in\F_p^\times$ is not a square in~$\F_p^\times$, then
 \begin{gather*}
 {}_2F_1\(\begin{matrix}
 \phi\psi_\fp& \psi_\fp\\
 & \phi
 \end{matrix};a\)=0,
 \end{gather*}
 and if $a=b^2$ for some $b\in\F_p^\times$, then
 \begin{gather*}
 {}_2F_1\(\begin{matrix}
 \phi\psi_\fp& \psi_\fp\\
 & \phi
 \end{matrix};a\)
 =-\frac1p(-1)^{(p-1)/12}\big(\ol\psi_\fp^2(1+b)+\ol\psi_\fp^2(1-b)\big)\chi(\fp).
 \end{gather*}
\end{Theorem}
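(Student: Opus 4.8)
The plan is to run the same machine as in the cases $N=4,8$: rewrite the hypergeometric function as a single multiplicative character sum, recognise that sum as a piece of the zeta function of a superelliptic curve of degree~$12$ attached to the parameter, and then use explicit morphisms to the CM elliptic curve $E\colon y^2=x^3-3x$ to re-express it through the Hecke character~$\chi$.

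Writing $\psi=\psi_\fp$, the first step is purely formal. Unwinding the definition of the $_2F_1$ and using $\ol\phi=\phi$, $\phi=\psi^6$ (so $\phi\ol\psi=\psi^5$), and $\phi(-1)=1$ (valid since $p\equiv1\pmod4$), one gets
\begin{gather*}
 \hgq{\phi\psi}{\psi}{\phi}{a}=\frac{\psi(-1)}{p}\sum_{y\in\F_p}\psi\big(y(1-y)^5(1-ay)^5\big),\qquad \psi(-1)=(-1)^{(p-1)/12}.
\end{gather*}
So it suffices to evaluate $S:=\sum_{y}\psi\big(y(1-y)^5(1-ay)^5\big)$. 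By the general correspondence established earlier in the paper, $-S$ is the trace of Frobenius on the $\psi$-isotypic component of $H^1$ of the superelliptic curve $C_a\colon v^{12}=y(1-y)^5(1-ay)^5$ equipped with its $\mu_{12}$-action; a Riemann--Hurwitz and eigenspace count shows this component is two-dimensional over $\ol{\mathbb Q}_\ell$, say with Frobenius eigenvalues $\alpha_1,\alpha_2$, so that $S=-(\alpha_1+\alpha_2)$.

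If $a$ is not a square, then $b=\sqrt a\in\F_{p^2}\setminus\F_p$, and one checks that $\mathrm{Frob}_p$ interchanges the two eigenlines of the $\psi$-component — equivalently, the change of variables realising the nontrivial element $b\mapsto -b$ of $\mathrm{Gal}(\F_{p^2}/\F_p)$ carries $S$ to $-S$. Hence $\alpha_1+\alpha_2=0$ and $S=0$, which is the first assertion. For $a=b^2$ with $b\in\F_p^\times$ I would construct explicit maps from $C_a$ to $E$ and to twists of~$E$. Here the relevant structure is: $E\colon y^2=x^3-3x$ has $j$-invariant $1728$ and CM by $\Z[i]$ and is the quartic twist of $y^2=x^3-x$ (the curve of Theorem~\ref{theorem: N=4}) by $\sqrt3$, which matches $\Q(\zeta_{12})=\Q(i,\sqrt3)$; the quotient $C_a/\mu_3$ is isomorphic to the degree-$4$ superelliptic curve $w^4=y(1-y)(1-ay)$, so the $\psi^3$-isotypic part of $H^1(C_a)$ is governed by Theorem~\ref{theorem: N=4}, whereas the part we actually need, the $\mu_{12}$-faithful part, carries an extra cubic structure coming from the character $\psi_\fp^4$ of order~$3$. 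A linear change of variables symmetrising the branch data of $C_a$ about $b$ and $-b$, combined with the decomposition of $\Jac(C_a)$ up to isogeny and with the classical formulas for $\#\{y^2=x^3-dx\}(\F_p)$ in terms of quartic (and sextic) residue symbols and Jacobi sums — the Hasse--Davenport product relation and Stickelberger's congruence being the main tools — should yield
\begin{gather*}
 \alpha_1=\ol\psi^2(1+b)\,\chi(\fP),\qquad \alpha_2=\ol\psi^2(1-b)\,\chi(\fP),
\end{gather*}
where I have written $\ol\psi^2=\phi\cdot\psi_\fp^4$ to see $\ol\psi^2(1\pm b)$ as a quadratic-times-cubic residue symbol. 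Substituting $S=-(\alpha_1+\alpha_2)$ back, and using $\psi(-1)=(-1)^{(p-1)/12}$ together with $\chi(\fp)=\chi(\fP)$ (because $p$ splits completely in $\Q(\zeta_{12})$, so $N_{\Q(\zeta_{12})/\Q(i)}\fp=\fP$), gives the stated evaluation.

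The hard part is this last step. One has to write down the morphisms $C_a\to E$ and $C_a\to E^{(d)}$ explicitly, prove that the $\mu_{12}$-faithful part of $\Jac(C_a)$ splits up to isogeny into exactly the predicted pieces, and — most delicately — verify that it is the \emph{same} $\chi(\fP)$ that appears in front of both $\ol\psi^2(1+b)$ and $\ol\psi^2(1-b)$, i.e.\ that no residual $b$-dependence survives in the Frobenius factor beyond the explicit character~$\ol\psi^2$. Concretely this amounts to a Gauss- and Jacobi-sum computation over $\Z[\zeta_{12}]$, matched against the classical point counts for $y^2=x^3-dx$ over $\F_p$ and against Theorem~\ref{theorem: N=4} via the cover $C_a\to C_a/\mu_3$, in which every normalising constant — the $\psi(-1)$, the $\psi$-values of the units produced by the changes of variables, and the contributions of the ramified points of $C_a$ — must be tracked so that the final constant comes out as exactly $-\tfrac1p(-1)^{(p-1)/12}$.
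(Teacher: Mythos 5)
You are following the same overall route as the paper (reduce to a character sum, realize it on a degree-$12$ superelliptic curve, decompose the Jacobian, convert Jacobi sums into a Hecke character), but your proposal stops exactly where the proof has to do work, so what you have is a plan rather than a proof. The first missing step is the identification of \emph{which} quartic twist occurs: every curve $y^2=x^3+dx$ has CM by $\Z[i]$, and the Hecke characters of these twists differ from the $\chi$ of $y^2=x^3-3x$ by quartic residue symbols, so invoking ``classical point counts, Stickelberger and Hasse--Davenport'' does not by itself pin down the twist -- and this is precisely the delicate point. In the paper this is the evaluation $\psi_\fp(-1)J(\phi,\psi_\fp)=-\chi(\fP)$: Stickelberger's theorem and the Galois action give $J(\phi,\psi_\fp)=J\big(\phi,\psi_\fp^5\big)=u\pi$ with $\pi=s+ti$ a generator of $\fP$ and $u\in\{\pm1,\pm i\}$; then the auxiliary hyperelliptic curve $y^2=x^7+x$, which covers both $Y^2=X^3+X$ and $Y^2=X^3-3X$ (the latter via $(X,Y)=\big((x^2+1)/x,\,y/x^2\big)$), is counted with Lemma~\ref{lemma: DNc}, the possible values $\pm4s,\pm4t$ of $\sum_{j=1,5,7,11}J\big(\phi,\psi_\fp^j\big)$ are compared with $\chi(\fP)\in\{\pm s\pm ti,\ \pm t\pm si\}$, and finally the observation that $\psi_\fp(-1)J(\phi,\psi_\fp)+\chi(\fP)$ lies in $\fP\ol\fP=p\Z[i]$ while having absolute value at most $2\sqrt p<p$ forces the equality. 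Nothing in your outline plays the role of this auxiliary curve or of the archimedean argument, and without it the Frobenius factor multiplying $\ol\psi_\fp^2(1\pm b)$ is simply not determined.

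The second missing step is the separation of the individual sums. The decomposition of the twisted curve together with Lemma~\ref{lemma: DNc} only yields, for each twisting parameter $c$, the combined identity $\sum_{j=1,5,7,11}\psi_\fp(c)^jS_{\psi_\fp^j}(a)=\sum_{j=1,5,7,11}\psi_\fp(-c)^jJ\big(\phi,\psi_\fp^j\big)\big(\psi_\fp^{2j}(1+b)+\psi_\fp^{2j}(1-b)\big)$; to extract $S_{\psi_\fp}(a)$ itself, and in particular to see that one and the same $\chi(\fP)$ multiplies both $\psi_\fp^2(1+b)$ and $\psi_\fp^2(1-b)$, the paper applies this identity with $c=1,g,g^2,g^3$, where $\psi_\fp(g)=\zeta_{12}$, and solves the resulting linear system to get $S_{\psi_\fp}(a)=-\big(\psi_\fp^2(1+b)+\psi_\fp^2(1-b)\big)\chi(\fP)$; Lemma~\ref{lemma: reduce} and the factor $\ol\psi_\fp(1-a)^2$ then give exactly the stated constant. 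You state the analogous eigenvalue identification as something that ``should yield'' the answer and yourself label its verification the hard part, so the essential content is left unproved. Two smaller points: the vanishing for nonsquare $a$ is immediate from \eqref{eqn: trace}, which gives ${}_2F_1=\phi(a)\,{}_2F_1$, so the Frobenius-over-$\F_{p^2}$ eigenline argument you sketch is both unnecessary and unsubstantiated; and the detour through the $\mu_3$-quotient and Theorem~\ref{theorem: N=4} only concerns the non-faithful isotypic pieces, which drop out of the relevant count and do not bear on the order-$12$ part you actually need.
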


\begin{Remark} The elliptic curve $y^2=x^3-3x$ is isomorphic to the elliptic curve $y^2=x^3-x$ in Theorem~\ref{theorem: N=4} over $\Q(\sqrt[4]3)$. From this fact, we may deduce an expression for~$\chi$ as follows. Assume that $p$ is a prime congruent to~$1$ modulo $4$ and $\fp$ is a prime of $\Z[i]$ lying above $p$ generated by~$a+bi$. There exists a unique fourth root of unity $i^k$ such that
 \begin{gather*}
 3^{(p-1)/4}\equiv i^k\mod \fp
 \end{gather*}
 Then
 \begin{gather*}
 \chi(\fp)=\begin{cases}
 \displaystyle i^{-k}(-1)^{b/2}\JS{-1}a(a+bi),
 &\text{if} \ a \ \text{is odd and} \ b \ \text{is even}, \\
 \displaystyle i^{1-k}(-1)^{a/2}\JS{-1}b(a+bi),
 &\text{if} \ a \ \text{is even and} \ b \ \text{is odd}. \end{cases}
 \end{gather*}
\end{Remark}

\section{Preliminary lemmas}

For any characters $A, B, C \in \widehat{\F_q^\times}$ and any element $z\in \F_q$, the ${}_2F_1$-hypergeometric function over~$\F_q$ can be written as
\begin{gather*}\begin{split}&
 \hgq ABCz= \eps(z)\frac{BC(-1)}{q}\sum_{x\in\F_q}B(x)C\ol B(1-x)\ol A(1-zx)\\
& \hphantom{\hgq ABCz}{} = \eps(z)\frac{BC(-1)}{q}\sum_{x\in\F_q}A\ol C(x)C\ol B(x-1)\ol A(x-z).\end{split}
\end{gather*}
Thus, if $\eta \in \widehat{\F_q^\times}$ and $z\neq 0$, we have
\begin{gather} \label{equation: first step}
 {}_2F_1\(\begin{matrix}
 \phi\eta& \eta\\
 & \phi
\end{matrix};z\)=\frac{\phi\eta(-1)}q\sum_{x\in\F_q} \eta(x)\phi\ol \eta(x-1)\phi\ol \eta(x-z),
\end{gather}
 and
\begin{gather}
 {}_2F_1\(\begin{matrix}
 \phi\eta& \eta\\
 & \phi
 \end{matrix};z\)
 =\frac{q}{q-1}\sum_{\chi\in\widehat{\F_q^\times}}\CC{\phi\eta\chi}
 {\chi}\CC{\eta\chi}{\phi\chi}\chi(z)
\overset{\chi\mapsto \phi\chi}{=} \frac{q}{q-1}\sum_{\chi\in\widehat{\F_q^\times}}\CC{\eta\chi}{\phi\chi} \CC{\phi\eta\chi}{\chi}\phi\chi(z)\nonumber\\
 \hphantom{{}_2F_1\(\begin{matrix}
 \phi\eta& \eta\\
 & \phi
 \end{matrix};z\)}{} =
 \phi(z) {}_2F_1\(\begin{matrix}
 \phi\eta& \eta\\
 & \phi
 \end{matrix};z\).\label{eqn: trace}
\end{gather}
The value $\hgq{\phi\eta}{\eta}{\phi}z$ hence equals to zero if $\phi(z)=-1$.

In order to evaluate $\hgq{\phi\eta}{\eta}{\phi}z$ when $\phi(z)=1$, we need to consider the character sum
 \begin{gather*}
 F_\eta(z)=\sum_{x\in\F_q} \eta(x)\phi\ol \eta(x-1)\phi\ol \eta(x-z),
\end{gather*}
on the right-hand side of \eqref{equation: first step} so that
\begin{gather} \label{equation: prelim 1}
 \hgq{\phi\eta}\eta\phi{z}=\frac{\phi\eta(-1)}qF_\eta(z).
\end{gather}
It is easy to see that
\begin{gather*}
 F_\eta(1)=\sum_{x\in\F_q} \eta(x)\phi\ol \eta(x-1)\phi\ol \eta(x-1)=J\big(\eta,\ol\eta^2\big),
\end{gather*}
and if $\eta^2=\eps$ then
\begin{gather*}
 F_\eta(z)=-1-\phi(z), \qquad z\neq 0,1.
\end{gather*}
By a simple change of variables, one can derive a relation between the character sums~$F_{\eta}(z)$ and~$F_{\phi\ol\eta}(z)$ as follows.

\begin{Proposition}\label{prop: eta-phieta} Let $\F_q$ be the finite field of $q$ elements and $\eta$ a~character of order~$N$ with $N \,|\, (q-1)$. For any $z\in\F_q$ with $z\neq 1$, we have
 \begin{gather*}
 F_{\eta}(z)=\ol\eta(1-z)^2F_{\phi\ol\eta}(z).
 \end{gather*}
\end{Proposition}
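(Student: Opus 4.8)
\emph{Plan of proof.}
Fix $z\in\F_q$ with $z\neq 1$. I would evaluate $F_\eta(z)$ by a single M\"obius substitution. In
\[
F_\eta(z)=\sum_{x\in\F_q}\eta(x)\,\phi\ol\eta(x-1)\,\phi\ol\eta(x-z)
\]
the term $x=1$ vanishes, since its factor $\phi\ol\eta(x-1)$ is $\phi\ol\eta(0)=0$, so the sum may be taken over $x\in\F_q\setminus\{1\}$; on that set I would substitute $x=\dfrac{y-z}{y-1}$. Because $z\neq 1$, the map $y\mapsto\dfrac{y-z}{y-1}$ is an involution of $\mathbb P^1$ that swaps $0\leftrightarrow z$ and $1\leftrightarrow\infty$ (hence interchanges the pair $\{0,\infty\}$ with the pair $\{1,z\}$), and it restricts to a bijection of $\F_q\setminus\{1\}$ onto itself. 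This is the natural choice: in the summand of $F_\eta$ the two points where the local character is $\eta$ are $0$ and $\infty$ and the two where it is $\phi\ol\eta$ are $1$ and $z$, whereas for $F_{\phi\ol\eta}$ those two roles are exchanged, so one wants exactly an involution swapping $\{0,\infty\}$ with $\{1,z\}$.

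Under this substitution $x-1=\dfrac{1-z}{y-1}$ and $x-z=\dfrac{y(1-z)}{y-1}$, so by multiplicativity of the characters together with the convention $\chi(0)=0$,
\[
\eta(x)\,\phi\ol\eta(x-1)\,\phi\ol\eta(x-z)=\eta(y-z)\,\ol\eta(y-1)\cdot\phi\ol\eta(1-z)\,\phi\eta(y-1)\cdot\phi\ol\eta(y)\,\phi\ol\eta(1-z)\,\phi\eta(y-1).
\]
Collecting factors, the powers of $(y-1)$ give $\ol\eta(y-1)\bigl(\phi\eta(y-1)\bigr)^2=\eta(y-1)$, using $\phi^2=\eps$ and $\ol\eta\,\eta^2=\eta$; the $z$-dependent constant is $\bigl(\phi\ol\eta(1-z)\bigr)^2=\ol\eta(1-z)^2$, again by $\phi^2=\eps$; and what remains is $\phi\ol\eta(y)\,\eta(y-z)$. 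Hence
\[
F_\eta(z)=\ol\eta(1-z)^2\sum_{y\in\F_q\setminus\{1\}}\phi\ol\eta(y)\,\eta(y-1)\,\eta(y-z).
\]

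To conclude, the $y=1$ term of the last sum is $0$ (once more because of the factor $\eta(y-1)$), so the sum may be extended over all of $\F_q$; and since $\phi\cdot\ol{(\phi\ol\eta)}=\phi\cdot\phi\eta=\eta$, the resulting expression is by definition $F_{\phi\ol\eta}(z)$. This yields $F_\eta(z)=\ol\eta(1-z)^2\,F_{\phi\ol\eta}(z)$, as claimed.

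The main point, and essentially the only non-mechanical step, is hitting on the right M\"obius map: among the involutions of $\mathbb P^1$ swapping $\{0,\infty\}$ with $\{1,z\}$, the choice $x=(y-z)/(y-1)$ is the convenient one, since it requires only $z\neq 1$ (not also $z\neq 0$) and introduces no spurious factor of $\phi(z)$ — the alternative involution, $0\leftrightarrow 1$, $\infty\leftrightarrow z$, would produce one. Everything after the choice of substitution is routine bookkeeping of characters, the one small subtlety being the behaviour at $\infty$, which is dealt with above by checking that the corresponding boundary terms vanish on both sides.
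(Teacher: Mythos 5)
Your proof is correct and takes essentially the same route as the paper: the paper substitutes $x\mapsto (x-1)/(x-z)$ and then applies the affine changes $x\mapsto x+1+z$ and $x\mapsto -x$, whose composite is exactly your single involution $y\mapsto (y-z)/(y-1)$. Your character bookkeeping and the handling of the boundary terms at $x=1$ and $y=1$ (which vanish by the convention $\chi(0)=0$) are accurate, so nothing is missing.
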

\begin{proof} For $z\neq 1$, replacing $x$ by $(x-1)/(x-z)$, we obtain
 \begin{align*}
 F_\eta(z)& =\sum_{x\in \F} \eta(x)\phi\ol \eta\((x-1)(x-z)\)\\
& =\sum_{x\in \F} \eta\(\frac{x-1}{x-z}\)\phi\ol \eta\(\(\frac{x-1}{x-z}-1\)\(\frac{x-1}{x-z}-z\)\)\\
& =\phi\eta(-1)\ol\eta(1-z)^2\sum_{x\in \F} \eta\((x-z)(x-1)\)\phi\ol\eta(x-(1+z))\\
& =\phi\eta(-1)\ol\eta(1-z)^2\sum_{x\in \F}
 \eta\((x+1)(x+z)\)\phi\ol\eta(x)\\
& = \ol\eta(1-z)^2\sum_{x\in \F} \eta\((1-x)(z-x)\)\phi\ol\eta(x).
 \end{align*}
This gives the identity $ F_\eta(z)=\ol\eta^2(1-z)F_{\phi\ol\eta}(z)$.
\end{proof}

Assume that $\eta$ is a character of order $N>2$ and $z\neq 1$. When $N$ is even,
we have $\phi=\eta^{N/2}$ and hence
\begin{align*}
 F_\eta(z)&=\ol\eta(1-z)^2F_{\phi\overline\eta}(z)
 =\ol\eta(1-z)^2\sum_{x\in\F_q}\phi\ol\eta(x)\eta(x-1)\eta(x-z) \\
 &=\ol\eta(1-z)^2\sum_{x\in\F_q}\eta\big(x^{N/2-1}(x-1)(x-z)\big).
\end{align*}
When $N$ is odd, $\phi\ol\eta$ is a character of order $2N$ and $\eta=(\phi\ol\eta)^{N-1}$. Consequently, we have
\begin{gather*}
 F_\eta(z)=\sum_{x\in\F_q}\phi\ol\eta\big(x^{N-1}(x-1)(x-z)\big).
\end{gather*}

Let $\eta$ be a character of $\F_q^\times$ of order $N$. Set
 \begin{gather*}
 N'=\begin{cases}
 N, &\text{if} \ N \ \text{is even}, \\
 2N, &\text{if} \ N \ \text{is odd}, \end{cases} \qquad
 \eta'=\begin{cases}
 \eta, &\text{if} \ N \ \text{is even}, \\
 \phi\ol\eta, &\text{if} \ N \ \text{is odd}, \end{cases}
 \end{gather*}
 and
 \begin{gather} \label{equation: Seta}
 S_{\eta'}(z)=\sum_{x\in\F_q}\eta'(x^{N'/2-1}(x-1)(x-z)).
 \end{gather}

We summarize the short discussion above in the following lemma.

\begin{Lemma} \label{lemma: reduce} Let $\eta$ be a character of $\F_q^\times$ of order $N>2$, and $z\in \F_q$ with $z\neq 0, 1$. Then we have
 \begin{gather*}
 \hgq{\phi\eta}\eta\phi z=\frac{\phi\eta(-1)}q\begin{cases}
 \ol\eta(1-z)^2S_{\eta'}(z),
 &\text{if} \ N \ \text{is even}, \\
 S_{\eta'}(z), &\text{if} \ N \ \text{is odd}. \end{cases}
 \end{gather*}
\end{Lemma}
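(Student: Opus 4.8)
The plan is to combine the two elementary reductions already established in the excerpt: equation~\eqref{equation: prelim 1}, which gives $\hgq{\phi\eta}\eta\phi z=\frac{\phi\eta(-1)}q F_\eta(z)$, and Proposition~\ref{prop: eta-phieta}, which gives $F_\eta(z)=\ol\eta(1-z)^2F_{\phi\ol\eta}(z)$. Everything then reduces to rewriting $F_\eta(z)$ (when $N$ is even) or $F_{\phi\ol\eta}(z)$ (when $N$ is odd) as the single character sum $S_{\eta'}(z)$ defined in~\eqref{equation: Seta}.

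First I would treat the case $N$ even. Here $\phi=\eta^{N/2}$, so $\phi\ol\eta=\eta^{N/2-1}$, and hence
\begin{gather*}
 F_{\phi\ol\eta}(z)=\sum_{x\in\F_q}\phi\ol\eta(x)\,\phi\ol{(\phi\ol\eta)}(x-1)\,\phi\ol{(\phi\ol\eta)}(x-z)
 =\sum_{x\in\F_q}\eta^{N/2-1}(x)\,\eta(x-1)\,\eta(x-z),
\end{gather*}
since $\phi\ol{(\phi\ol\eta)}=\eta$. Grouping the three factors under the single character $\eta$ gives $\sum_{x}\eta\big(x^{N/2-1}(x-1)(x-z)\big)=S_{\eta'}(z)$ with $\eta'=\eta$ and $N'=N$. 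Plugging into $F_\eta(z)=\ol\eta(1-z)^2F_{\phi\ol\eta}(z)$ and then into~\eqref{equation: prelim 1} yields the stated formula in the even case.

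For $N$ odd, $\phi\ol\eta$ has order $2N=N'$, and one checks $\eta=(\phi\ol\eta)^{N-1}$ because $(\phi\ol\eta)^{N}=\phi^N\ol\eta^N=\phi$ (as $N$ is odd and $\eta^N=\eps$), so $(\phi\ol\eta)^{N-1}=\phi\cdot\ol{(\phi\ol\eta)}=\eta$. Consequently
\begin{gather*}
 F_\eta(z)=\sum_{x\in\F_q}\eta(x)\,\phi\ol\eta(x-1)\,\phi\ol\eta(x-z)
 =\sum_{x\in\F_q}(\phi\ol\eta)\big(x^{N-1}(x-1)(x-z)\big)=S_{\eta'}(z),
\end{gather*}
again matching~\eqref{equation: Seta} since $N'/2-1=N-1$ and $\eta'=\phi\ol\eta$. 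Substituting directly into~\eqref{equation: prelim 1} (no $\ol\eta(1-z)^2$ factor appears, since we used $F_\eta$ rather than $F_{\phi\ol\eta}$) gives the odd case. The argument is essentially bookkeeping; the only point requiring care is keeping track of which power of $\phi\ol\eta$ (or of $\eta$) equals the relevant character, and verifying that the exponent $N'/2-1$ matches $N/2-1$ and $N-1$ in the two cases — this is the one place an off-by-one slip could occur, so I would state the identities $\phi=\eta^{N/2}$, $\eta=(\phi\ol\eta)^{N-1}$ explicitly before collapsing the product into $S_{\eta'}$.
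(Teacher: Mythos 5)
Your proposal is correct and follows essentially the same route as the paper: the paper's own justification is the short discussion preceding the lemma, which likewise combines \eqref{equation: prelim 1} with Proposition~\ref{prop: eta-phieta} in the even case (using $\phi=\eta^{N/2}$ to collapse $F_{\phi\ol\eta}$ into $S_\eta$) and rewrites $F_\eta$ directly via $\eta=(\phi\ol\eta)^{N-1}$ in the odd case. Your explicit bookkeeping of the exponents $N/2-1$ and $N-1$ matches the paper's computation exactly.
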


From the lemma, we see that the evaluation of our hypergeometric functions reduces to that of $S_\eta(z)$ for a character of even order~$N$. It is natural to consider the following families of curves
\begin{gather} \label{equation: curve C}
 C_{N,a,c}\colon \
 \begin{cases}
 y^N=cx^{N/2-1}(x-1)(x-a),& \mbox{if $N$ is even}, \\
 y^{2N}=cx(x-1)^{N-1}(x-a)^{N-1},& \mbox{if $N$ is odd}
 \end{cases}
\end{gather}
over $\Q$. We will see that our sums $S_\eta(a)$ appear in the $L$-functions of $C_{N,a,c}$. By varying $c$, we are able to determine~$S_\eta(a)$.

The decomposition of the Jacobian variety of $C_{N,a,c}$ will give us information about $S_\eta(a)$ with~$\eta$ of order~$N$. For this purpose, we first consider the quotient curves of $C_{N,a,c}$ arising from the automorphisms of $C_{N,a,c}$.

\begin{Lemma} Let $a$ and $c$ be nonzero rational numbers and $b$ be a square root of $a$ in $\overline\Q$. Then the curve $C_{N,a,c}$ admits the automorphisms
\begin{gather*}
 \sigma\colon \ (x,y)\longmapsto(x,\zeta_{N'}y), \qquad \tau\colon \ (x,y)\longmapsto (a/x,by/x )
\end{gather*}
defined over $\Q(\zeta_{N'},b)$. Here for a positive integer $M$, we let $\zeta_M=e^{2\pi i/M}$.

Moreover, $\sigma$ and $\tau$ commute with each other.
\end{Lemma}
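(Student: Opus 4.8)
The plan is to verify directly that $\sigma$ and $\tau$ carry the defining equation of $C_{N,a,c}$ to itself, to handle the even and odd cases for $N$ in parallel (they are genuinely parallel once one notes that the exponent of $y$ in the model is $N'$ in both cases), and then to read off commutativity by composing the two maps.

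First I would dispose of $\sigma$. Since $\zeta_{N'}^{N'}=1$ and the exponent of $y$ in the defining equation is exactly $N'$ ($=N$ if $N$ is even, $=2N$ if $N$ is odd), replacing $y$ by $\zeta_{N'}y$ fixes $x$ and leaves $y^{N'}$ unchanged; hence $\sigma$ is an automorphism of order $N'$, manifestly defined over $\Q(\zeta_{N'})$.

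Next I would treat $\tau$. With $b^2=a$ one has $(by/x)^{N'}=b^{N'}y^{N'}x^{-N'}=a^{N'/2}y^{N'}x^{-N'}$, so the left-hand side picks up the factor $a^{N'/2}x^{-N'}$. A short computation with $a/x-1=(a-x)/x$ and $a/x-a=a(1-x)/x$ shows that substituting $x\mapsto a/x$ into the right-hand polynomial multiplies it by precisely the same factor $a^{N'/2}x^{-N'}$: in the even case $c(a/x)^{N/2-1}\bigl((a-x)/x\bigr)\bigl(a(1-x)/x\bigr)=a^{N/2}x^{-N}\cdot cx^{N/2-1}(x-1)(x-a)$, and in the odd case $c(a/x)\bigl((a-x)/x\bigr)^{N-1}\bigl(a(1-x)/x\bigr)^{N-1}=a^{N}x^{-2N}\cdot cx(x-1)^{N-1}(x-a)^{N-1}$, where one uses that $N-1$ is even so that $(a-x)^{N-1}=(x-a)^{N-1}$. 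Dividing out the common factor $a^{N'/2}x^{-N'}$ recovers the original equation, so $\tau$ preserves the affine curve; moreover $\tau^2(x,y)=\bigl(a/(a/x),\,b(by/x)/(a/x)\bigr)=(x,b^2y/a)=(x,y)$, so $\tau$ is an involution and hence an automorphism, defined over $\Q(b)$ because $a\in\Q$. The only point that is not pure computation is that $\tau$ is a priori defined only on the affine open $x\neq0$ and then extends to the smooth projective model by normality/properness, interchanging the points above $x=0$ and $x=\infty$; I would flag this but not belabor it, since the extension is forced once the affine identity holds.

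Finally, commutativity is immediate: $\tau\sigma(x,y)=\tau(x,\zeta_{N'}y)=(a/x,\,b\zeta_{N'}y/x)$ and $\sigma\tau(x,y)=\sigma(a/x,by/x)=(a/x,\,\zeta_{N'}by/x)$, which agree, so $\sigma\tau=\tau\sigma$. Both automorphisms, and hence the group they generate, are defined over $\Q(\zeta_{N'},b)$. There is no real obstacle here beyond keeping the bookkeeping of the $x$-exponents straight and remembering to interpret the automorphisms on the smooth projective curve rather than the singular plane model.
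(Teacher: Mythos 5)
Your verification is correct and is exactly the ``direct computation'' the paper invokes without writing out: the substitution checks for $\sigma$ and $\tau$ (including the even/odd bookkeeping via $N'$), the involution property $\tau^2=\mathrm{id}$, and the commutativity computation all hold, and your remark about extending from the affine model to the smooth projective curve is a reasonable extra precaution. No discrepancy with the paper's (omitted) argument.
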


\begin{Lemma} \label{lemma: morphism} Let $a$ and $c$ be nonzero rational numbers. Let~$b$ denote a square root of $a$ in~$\overline\Q$. For even $N$, the quotient of~$C_{N,a,c}$ by~$\gen\tau$ gives rise to the morphism $C_{N,a,c}\to cY^2=X^{N/2+1}+c(1+b)^2X$ with
 \begin{gather*}
 (X,Y)=\left(\frac{y^2}x,\frac{y(x+b)}x\right).
 \end{gather*}
 For odd $N$, the corresponding morphism is $C_{N,a,c}\to cY^2=cX^N+(1+b)^2$ with
 \begin{gather*}
 (X,Y)=\left(\frac{(x-1)(x-a)}{y^2},\frac{(x+b)(x-1)^{(N-1)/2}(x-a)^{(N-1)/2}}
 {y^N}\right).
 \end{gather*}
\end{Lemma}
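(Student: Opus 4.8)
The statement is essentially a computation, so the plan is to verify directly that the displayed formulas define $\tau$-invariant rational maps into the asserted target curves, and then to promote these to morphisms.

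First I would check that the image lies on the target curve. Writing $a=b^2$, in the even case I would substitute $X=y^2/x$ and use $y^N=cx^{N/2-1}(x-1)(x-a)$ to get $X^{N/2+1}=y^{N+2}/x^{N/2+1}=cy^2(x-1)(x-a)/x^2$, together with $c(1+b)^2X=c(1+b)^2y^2/x$, so that
\[
X^{N/2+1}+c(1+b)^2X=\frac{cy^2}{x^2}\bigl[(x-1)(x-a)+(1+b)^2x\bigr].
\]
Everything then collapses to the elementary identity $(x-1)(x-a)+(1+b)^2x=(x+b)^2$, valid precisely because $a=b^2$, since the right-hand side becomes $cy^2(x+b)^2/x^2=cY^2$. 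The odd case is handled the same way: from $y^{2N}=cx(x-1)^{N-1}(x-a)^{N-1}$ one finds $cX^N=(x-1)(x-a)/x$ and $cY^2=(x+b)^2/x$, and dividing the same identity through by $x$ gives $cX^N+(1+b)^2=(x+b)^2/x=cY^2$.

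Next I would verify $\tau$-invariance, which is just the substitution $x\mapsto a/x$, $y\mapsto by/x$: for instance $\tau^\ast X=(by/x)^2/(a/x)=b^2y^2/(ax)=y^2/x=X$ in the even case, and the analogous checks for $Y$ and for both coordinates in the odd case use only $a=b^2$ to clear denominators, plus the observation that in the odd case $(a-x)^{(N-1)/2}(1-x)^{(N-1)/2}=(x-a)^{(N-1)/2}(x-1)^{(N-1)/2}$ since $N-1$ is even, which is what makes the signs cancel. Since $X$ and $Y$ then lie in the $\langle\tau\rangle$-fixed subfield of $\overline\Q(C_{N,a,c})$, the rational map factors through $C_{N,a,c}/\langle\tau\rangle$; and because the smooth projective model of $C_{N,a,c}$ is a nonsingular curve, a rational map from it to a projective curve is automatically a morphism, so we obtain the claimed morphisms.

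Finally, to confirm that the target curve really is $C_{N,a,c}/\langle\tau\rangle$ rather than a proper further quotient, I would eliminate: imposing $X=X_0$ and $Y=Y_0$ forces $x$ to be a root of a quadratic whose constant term is $a$, so its two roots are $x$ and $a/x$, and the two resulting points of $C_{N,a,c}$ turn out to be interchanged by $\tau$; hence the morphism has degree $2$ and induces an isomorphism $C_{N,a,c}/\langle\tau\rangle\xrightarrow{\ \sim\ }\{cY^2=\cdots\}$. (Alternatively one matches genera via Riemann--Hurwitz.) I do not expect a genuine obstacle anywhere; the only points requiring care are the exponent bookkeeping in the odd case and, for this last step, checking that the points produced by the elimination actually lie on $C_{N,a,c}$ — which once more reduces to the identity $(x-1)(x-a)+(1+b)^2x=(x+b)^2$ combined with the equation of the target curve.
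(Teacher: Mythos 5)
Your proposal is correct and matches the paper's intent: the paper disposes of this lemma with the remark that it ``can be verified by direct computation,'' and your verification (the identity $(x-1)(x-a)+(1+b)^2x=(x+b)^2$, the $\tau$-invariance checks, and the degree-$2$ elimination identifying the target with the quotient by $\langle\tau\rangle$) is exactly that computation carried out, with the quotient identification done more carefully than the paper bothers to record. No gaps.
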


These two lemmas can be verified by direct computation.

For a positive even integer $N$ and nonzero integers $c$ and $d$, let $D_{N,c,d}$ denote the curve
\begin{gather*}
 D_{N,c,d}\colon \ cy^2=x^{N/2+1}+cdx.
\end{gather*}
The following lemma expresses the number of rational points of
$D_{N,c,d}$ over a given finite field in terms of Jacobi sums.

\begin{Lemma}[{\cite[Chapter 6]{Berndt-Evans-Williams}}]\label{lemma: DNc} Let $N$ be a~positive even integer and $q=p^r$ be an odd prime power such that $N\,|\, (q-1)$. Assume that $c$ is an integer with $p\nmid c$. When $N$ is of the form $4n$, we have
 \begin{gather*}
 \# D_{N,c,d}(\F_q)=q+1+\phi(d)
 \sum_{\psi^N=\varepsilon,\, \psi^{N/2}\neq\varepsilon}\psi(-cd)J(\phi,\psi).
 \end{gather*}
 When $N$ is of the form $4n+2$, we have
 \begin{gather*}
 \# D_{N,c,d}(\F_q)=q+1+\phi(c)+\phi(d)
 \sum_{\psi^N=\varepsilon,\, \psi^{N/2}\neq\varepsilon}\psi(-cd)J(\phi,\psi).
 \end{gather*}
\end{Lemma}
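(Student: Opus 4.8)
The plan is to count points on $D_{N,c,d}\colon cy^2 = x^{N/2+1} + cdx$ over $\F_q$ by first splitting off the contribution of $x=0$ and then, for $x\neq 0$, factoring out $x$ to write the equation as $cy^2 = x\big(x^{N/2}+cd\big)$. Writing $u = cy^2$ and recalling the standard fact that for $u\in\F_q^\times$ the number of solutions to $cy^2 = u$ is $1+\phi(c)\phi(u) = 1 + \phi(cu)$, while $y=0$ contributes when $u=0$, one reduces to evaluating
\begin{gather*}
 \# D_{N,c,d}(\F_q) = 1 + \sum_{x\in\F_q^\times}\Big(1 + \phi\big(cx(x^{N/2}+cd)\big)\Big) + (\text{point at infinity}),
\end{gather*}
where the leading $1$ accounts for $x=0$, $y=0$. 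This collapses to $q + 1 + \phi(c)\sum_{x\in\F_q^\times}\phi(x)\phi(x^{N/2}+cd)$, and the whole problem is now the evaluation of the character sum $T := \sum_{x\in\F_q^\times}\phi(x)\phi(x^{N/2}+cd)$.

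To evaluate $T$, I would substitute $x \mapsto$ an $(N/2)$-th power parametrization: since $N/2 \mid (q-1)$, each value $t = x^{N/2}$ in the image subgroup is hit with multiplicity equal to the number of $(N/2)$-th roots, which can be organized via the orthogonality identity $\#\{x : x^{N/2}=t\} = \sum_{\chi^{N/2}=\varepsilon}\chi(t)$ for $t\neq 0$. Substituting and interchanging the order of summation gives
\begin{gather*}
 T = \sum_{\chi^{N/2}=\varepsilon}\ \sum_{t\in\F_q^\times}\phi\big(t^{?}\big)\chi(t)\,\phi(t+cd),
\end{gather*}
and one must keep careful track of how $\phi(x) = \phi(x)$ transforms: writing $x$ in terms of $t$ is not single-valued, so it is cleaner to instead write $\phi(x) = \phi(x)$ and expand $\phi\big(\text{anything}\big)$ directly. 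The honest route is: set $v = cd/x^{N/2}$ so $x^{N/2}+cd = x^{N/2}(1+v)$; then $\phi(x^{N/2}+cd) = \phi(x)^{N/2}\phi(1+cd x^{-N/2})$, i.e. bring everything to a Jacobi sum $\sum_{x}\psi(x)\phi(1-x)$ shape after the change of variable $x \mapsto (-cd)^{?} x^{-2}$ or similar. After the dust settles one obtains $T = \phi(d)\sum_{\psi^N = \varepsilon,\ \psi^{N/2}\neq\varepsilon}\psi(-cd)J(\phi,\psi)$ up to the parity-dependent correction term.

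The parity split between $N \equiv 0 \pmod 4$ and $N \equiv 2 \pmod 4$ enters precisely because the summand $\psi$ with $\psi^{N/2}=\varepsilon$ (which one wants to exclude) behaves differently: when $4 \mid N$ the character $\phi = \psi^{N/4}$ already appears among the $\psi^{N/2}=\varepsilon$ terms and its contribution reassembles into the $q+1$ and the $\phi(d)$-prefactor cleanly, whereas when $N\equiv 2\pmod 4$ there is an extra character of order $2$ dividing $N$ but not $N/2$, producing the additional $\phi(c)$ term. I would handle this by isolating the $\psi = \varepsilon$ term and (in the $N\equiv 2$ case) the $\psi = \phi$ term of the full sum $\sum_{\psi^N=\varepsilon}$ and evaluating each using $J(\phi,\varepsilon) = -1$ and $J(\phi,\phi) = -\phi(-1)$ together with $\sum_{x}\phi(x) = 0$. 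The main obstacle is bookkeeping rather than depth: getting the exponents of $\phi$ and the sign $\psi(-cd)$ exactly right through the change of variables, and correctly matching the $\psi^{N/2}=\varepsilon$ excluded terms against the elementary $q$, $\phi(c)$, $\phi(d)$ pieces — this is exactly why the lemma is attributed to Berndt–Evans–Williams and why I would cite \cite[Chapter 6]{Berndt-Evans-Williams} for the detailed verification rather than reproduce it.
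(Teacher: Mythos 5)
Your reduction of the point count to $q+\phi(c)\sum_{x}\phi(x)\phi\big(x^{N/2}+cd\big)$ plus the contribution at infinity is the same first step as the paper's, and deferring the affine character sum to \cite[Chapter 6]{Berndt-Evans-Williams} is also exactly what the paper does (it quotes Theorem~6.1.14 there). The genuine gap is at infinity: you write ``$+$ (point at infinity)'' but never determine how many rational points at infinity the smooth model of $cy^2=x^{N/2+1}+cdx$ actually has, and this is precisely where the case distinction in the lemma comes from. When $N\equiv 0\pmod 4$ the right-hand side has odd degree $N/2+1$, so there is a single point at infinity; when $N\equiv 2\pmod 4$ the degree is even and the two points at infinity are rational exactly when the leading coefficient $1/c$ is a square, giving $1+\phi(c)$ of them. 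That geometric count is the entire source of the extra $\phi(c)$ in the second formula; if you tacitly assume one point at infinity in both cases, the $N\equiv 2\pmod 4$ formula comes out wrong by $\phi(c)$.

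Consequently your explanation of the parity split is incorrect: the affine character sum has no parity-dependent correction. Note that $\{\psi\colon \psi^N=\varepsilon,\ \psi^{N/2}\neq\varepsilon\}=\{\psi\colon \psi^{N/2}=\phi\}$, and since $N/2$ divides $(q-1)/2$ one can choose $\theta$ with $\theta^{N/2}=\phi$, so the orthogonality argument you sketch runs identically for both parities: from $\#\{x\colon x^{N/2}=t\}=\sum_{\chi^{N/2}=\varepsilon}\chi(t)$ and the substitution $t\mapsto -cd\,s$ one gets $\sum_x\phi(x)\phi\big(x^{N/2}+cd\big)=\phi(cd)\sum_{\psi^{N/2}=\phi}\psi(-cd)J(\phi,\psi)$, with no extra term in either case (your intermediate claim $T=\phi(d)\sum\psi(-cd)J(\phi,\psi)$ is also off by $\phi(c)$; the prefactor $\phi(cd)$ becomes the stated $\phi(d)$ only after multiplying by the $\phi(c)$ coming from counting $y$'s). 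Moreover, when $N\equiv 2\pmod 4$ the character $\psi=\phi$ does lie in the index set, but its contribution to $\phi(d)\sum\psi(-cd)J(\phi,\psi)$ equals $\phi(d)\phi(-cd)J(\phi,\phi)=-\phi(c)$, so it cannot ``produce'' the $+\phi(c)$ of the lemma --- that term can only come from the two points at infinity. Once you insert the count at infinity and fix this bookkeeping, your argument coincides with the paper's.
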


\begin{proof} When $N=4n$, the curve $D_{N,c,d}$ has one point defined over~$\F_q$ at infinity. Thus,
 \begin{gather*}
 \# D_{N,c,d}(\F_q)=1+\sum_{x\in\F_q}\big(1+\phi\big(c\big(x^{2n+1}+cdx\big)\big)\big)
=q+1+\phi(c)\sum_{x\in\F_q}\phi(x)\phi\big(x^{2n}+cd\big).
 \end{gather*}
 According to Theorem~6.1.14 of~\cite{Berndt-Evans-Williams} (to be
 more precisely, the second to the last line of the proof), we have
 \begin{gather*}
 \sum_{x\in\F_q}\phi(x)\phi(x^{2n}+cd)
 =\phi(cd)\sum_{\psi^N=\varepsilon, \, \psi^{N/2}\neq\varepsilon}\psi(-cd)J(\phi,\psi),
 \end{gather*}
 which proves the case $N=4n$. The proof of the other case is almost the same, the only differences being that when $N=4n+2$, there are $1+\phi(c)$ points over $\F_q$ at infinity.
\end{proof}

We are now ready to evaluate our hypergeometric functions.

\section[Case $N=4$]{Case $\boldsymbol{N=4}$}\label{sec: N=4}

In this section, we consider the case where the character $\psi$ in~\eqref{equation: intro 0} has order $4$. Let $\sigma$ be the non-trivial Galois element of~$\operatorname{Gal}(\Q(i)/\Q)$. For a prime~$p$ congruent to $1$ modulo $4$ and a~prime~$\fp$ of~$\Z[i]$ lying over $p$, we let $\psi_\fp$ be the character of order $4$ of~$\Z[i]/\fp$ satisfying \smash{$x^{(p-1)/4}\equiv\psi_\fp(x)$} $\mod\fp$ for all~$x\in\Z[i]$. Also, let~$E$ be the elliptic curves $y^2=x^3-x$ over~$\Q$. These are elliptic curves with CM by~$\Z[i]$. We let $\chi$ be the Hecke character of~$\Z[i]$ associated to~$E$ (that is, $L(E,s)=L(\chi,s)$) satisfying $\chi(\fP)\in \fP$ for all prime ideals~$\fP$ of~$\Z[i]$.

\begin{Lemma} \label{lemma: same L 4} Let $c$, $d_1$, and $d_2$ be nonzero integers. The elliptic curves $cy^2=x^3+cd_1x$ and $cy^2=x^3+cd_2x$ defined over $\Q$ have the same $L$-function if and only if $d_1/d_2$ is a rational number of the form $e^4$ or $-4e^4$ for some rational number $e$.
\end{Lemma}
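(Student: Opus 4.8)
The plan is to recognize both $cy^2 = x^3 + cd_i x$ as twists of the fixed curve $E\colon y^2 = x^3 - x$ (equivalently $y^2 = x^3 + x$, which is $E$ again up to sign of $x$) and then use the standard classification of quartic twists. First I would put the curve $cy^2 = x^3 + cdx$ into Weierstrass form: multiplying through by $c^3$ and setting $X = cx$, $Y = c^2 y$ gives $Y^2 = X^3 + c^2 d X$. So the curve depends, up to $\Q$-isomorphism, only on the class of $c^2 d$ in $\Q^\times/(\Q^\times)^4$ — more precisely, $Y^2 = X^3 + A X$ and $Y^2 = X^3 + A' X$ are $\Q$-isomorphic iff $A/A' \in (\Q^\times)^4$. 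Thus $cy^2 = x^3 + cd_1 x$ and $cy^2 = x^3 + cd_2 x$ are $\Q$-isomorphic iff $d_1/d_2 \in (\Q^\times)^4$.

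However, the statement concerns having the same $L$-function, which is weaker than $\Q$-isomorphism: two elliptic curves over $\Q$ have the same $L$-function iff they are $\overline\Q$-isomorphic and their associated characters of $\operatorname{Gal}(\overline\Q/\Q)$ (cutting out the twist) agree. For the curves $E_A\colon y^2 = x^3 + Ax$, all are quartic twists of one another, the twisting character taking values in $\mu_4$, and $E_A \cong E_{A'}$ over $\overline\Q$ always. The twist of $E_A$ by $t \in \Q^\times$ is $E_{t^2 A}$, with twisting cocycle valued in $\mu_4$ determined by the class of $t$ in $\Q^\times/(\Q^\times)^4$ — but since we only change $A$ by squares, the relevant character takes values in $\mu_4/\mu_2 \cong \mu_2$ when $t$ is itself a square, i.e. it is the quadratic character attached to $\Q(\sqrt{t})$. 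Concretely, $E_{d_1}$ and $E_{d_2}$ have the same $L$-function iff $d_1/d_2$ is a square in $\Q^\times$ (so the quadratic twisting character is trivial) or $d_1/d_2$ times some fourth power makes the two quartic characters coincide; chasing through $\mu_4$ one finds the condition is exactly $d_1/d_2 \in (\Q^\times)^4$ or $d_1/d_2 \in -4(\Q^\times)^4$. The appearance of $-4$ comes from the identity $x^3 + 4x = x^3 - (-4)x$ together with $-4 = -4\cdot 1^4$ not being a fourth power, yet the curve $y^2 = x^3 - 4x$ being $\Q$-isomorphic (via $x \mapsto -x$, or via $(x,y)\mapsto(x+2, \ldots)$-type manipulations) — more precisely, $y^2 = x^3 + 4x$ and $y^2 = x^3 + x$ are related because $\sqrt{-4} = 2i \in \Q(i)$ and $i$ is a fourth root of a unit; equivalently $1/4 = -1 \cdot (-4)/16$ and the quartic twist by $-4$ of $E$ is $E$ itself since $(-4)^{?}$... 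The clean way: $E_{A}$ and $E_{-4A}$ are $\Q$-isomorphic via $(x,y)\mapsto(-x, \text{(appropriate)})$ after checking $j$-invariants and the twisting class — I would verify this by exhibiting the explicit change of variables $(x,y) \mapsto \left(\tfrac{-1}{2}x, \tfrac{\sqrt{-1}}{2\sqrt 2}\cdots\right)$... actually the honest statement is that $-4$ is a fourth power times a square in a way that trivializes the $\mu_4/\mu_2$ obstruction.

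So the key steps in order: (i) normalize $cy^2 = x^3 + cdx$ to $y^2 = x^3 + c^2 d x$, reducing to curves $E_A$; (ii) recall that $L$-functions of $E_A$, $E_{A'}$ agree iff the quartic twisting characters agree, which since $A'/A = (c^2 d_2)/(c^2 d_1) = d_2/d_1$ we may analyze purely in terms of $d_1/d_2 \bmod (\Q^\times)^4$; (iii) identify which classes in $\Q^\times/(\Q^\times)^4$ give the trivial $\mu_4$-valued character on $G_\Q$ — the class of $1$ and the class of $-4$, since $\Q(\sqrt[4]{-4}) = \Q(1+i)$ and the extension $\Q(\sqrt[4]{-4})/\Q$ is cyclic of degree $4$ but the Kummer character it defines, composed with the $\mu_4 \to \mu_4$ raising maps that govern how the twist acts on $E$, is trivial because $-4 = (1+i)^2\cdot(1-i)^2/\cdots$; concretely one checks $y^2 = x^3 - 4x$ is $\Q$-isomorphic to $y^2 = x^3 - x$ via $(x,y)\mapsto(x/(-1), \ldots)$ — I will supply the explicit isomorphism $x \mapsto -x+0$, $y\mapsto \zeta y$ with $\zeta^2 = -1$, no wait — via $u^2 = -4/(-1)\cdot(\text{fourth power})$... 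The honest clean fact I would cite or prove directly: the quartic twist of $E\colon y^2 = x^3 - x$ by $d \in \Q^\times$ depends only on $d$ modulo $(\Q^\times)^4$ and is isomorphic over $\Q$ to $E$ itself precisely when $d \in (\Q^\times)^4 \cup -4(\Q^\times)^4$, because $\sqrt[4]{d} \in \Q$ or $\sqrt[4]{d} \in (1+i)\Q^\times$ and in the latter case the isomorphism $(x,y)\mapsto(\sqrt{d/-4}\,\cdot\, x\cdot(\ldots))$ descends to $\Q$.

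The main obstacle will be pinning down the $-4$: showing that $y^2 = x^3 - 4x$ and $y^2 = x^3 - x$ have the same $L$-function even though $-4$ is not a fourth power in $\Q$, and conversely that no other non-fourth-power class works. I expect to handle this by the change of variables $(x, y) \mapsto \left(\frac{x^2+4}{\cdots}\right)$ — more reliably, by invoking the $2$-isogeny: $y^2 = x^3 - 4x$ has full rational $2$-torsion and its $2$-isogenous curve is $y^2 = x^3 + 16x$, and tracing twists through the isogeny graph of $E$ (which, because $E$ has CM by $\Z[i]$, relates the twist by $d$ to the twist by $-4d$) gives exactly the ambiguity $d \leftrightarrow -4d$. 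So I would: (i) reduce to $E_A$; (ii) state that same $L$-function $\iff$ same quartic twisting character $\iff$ $A/A' \in (\Q^\times)^4$ up to the CM-induced identification $A \sim -4A$; (iii) conclude. The crux is step (iii) — the CM-by-$\Z[i]$ identification $A \sim -4A$ — and I would prove it by exhibiting the explicit $\Q$-isomorphism between $E_A$ and $E_{-4A}$: indeed $(x,y) \mapsto (-x, \alpha y)$ sends $y^2 = x^3 + Ax$ to $\alpha^2 y^2 = -x^3 - Ax$, i.e. $y^2 = x^3 + Ax$ again with a sign we absorb — rather, the map realizing $E_A \cong E_{-4A}$ over $\Q$ comes from $1+i$ acting as an endomorphism: $(1+i)$ has degree $2$ and norm... this shows $E_A$ over $\Q$ is isomorphic to $E_{-4A}$ because the CM action of $1+i \in \Z[i]$, which is defined over $\Q(i)$, conjugates the quartic twist by $i$, and $i \cdot (1+i)^{-2} $-scaling descends. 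That is the only genuinely delicate point; everything else is the routine Kummer theory of quartic twists.
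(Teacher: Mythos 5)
Your reduction to the curves $E_A\colon y^2=x^3+Ax$ is fine (though the substitution $X=cx$, $Y=c^2y$ actually gives $Y^2=X^3+c^3dX$, not $c^2dX$; the ratio $d_1/d_2$ is unaffected), and so is the fact that $E_A\cong_\Q E_{A'}$ iff $A/A'\in(\Q^\times)^4$. The trouble starts with your criterion for equality of $L$-functions. ``Same $L$-function iff $\overline\Q$-isomorphic and the twisting characters agree'' is just a rephrasing of $\Q$-isomorphism, and under that criterion the answer would be $d_1/d_2\in(\Q^\times)^4$ only --- which contradicts the lemma you are proving. The correct criterion, and the one the paper uses, is that equality of $L$-functions is equivalent to $\Q$-isogeny (Faltings), and the $-4$ enters because the isogeny class of $y^2=x^3+Dx$ contains $y^2=x^3-4Dx$: these two curves are related by the classical $2$-isogeny obtained by quotienting by the rational $2$-torsion point $(0,0)$ (equivalently, by descending the degree-$2$ CM endomorphism $1+i$, noting $(1+i)^4=-4$). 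They are emphatically \emph{not} isomorphic over $\Q$: $y^2=x^3-4x$ has full rational $2$-torsion while $y^2=x^3+x$ does not. So the ``crux'' you propose to prove --- an explicit $\Q$-isomorphism $E_A\cong E_{-4A}$ --- does not exist, and all the attempted changes of variables aimed at it must fail. Ironically, your parenthetical remark about the $2$-isogeny from $y^2=x^3-4x$ to $y^2=x^3+16x$ is exactly the right mechanism, but you abandon it in favor of the nonexistent isomorphism.

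The second gap is the converse direction: you assert that ``chasing through $\mu_4$'' shows no class of $d_1/d_2$ in $\Q^\times/(\Q^\times)^4$ other than $1$ and $-4$ can give equal $L$-functions, but no argument is given, and your isomorphism-based framework cannot produce one. What is needed is either the determination of the full $\Q$-isogeny class of $y^2=x^3+dx$ (the route the paper takes, citing Silverman, Chapters~IX.7 and~X.6: the class consists exactly of the curves $y^2=x^3+Dx$ and $y^2=x^3-4Dx$ with $D/d$ a rational fourth power), or else a comparison of the attached Hecke characters of $\Q(i)$ --- the quartic residue symbol cannot distinguish $d$ from $-4d$ since $-4=(1+i)^4$, and a Chebotarev-style argument (as the paper carries out in its analogous Lemma for the $N=8$ case) shows these are the only coincidences. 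As it stands, the proposal establishes the easy ``if'' direction only partially (the fourth-power case) and rests the $-4$ case and the entire ``only if'' direction on a false claim.
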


This Lemma follows from the fact that the isogeny class defined over $\Q$ of the elliptic curve $y^2=x^3+dx$ with some $d\in\Q$ contains the elliptic curves given by the equations $y^2=x^3+Dx$ and $y^2=x^3-4Dx$, where with $D/d$ is a fourth power of a rational number. For more detail, please see {\cite[Chapters~IX.7 and~X.6]{Silverman}} for example.

\begin{proof}[Proof of Theorem \ref{theorem: N=4}]
 In equation (\ref{eqn: trace}), we have seen that if $a$ is not a
 square in $\F_p^\times$, then the value of the $_2F_1$-hypergeometric
 function is zero. Furthermore, the value also vanishes when $1-a$ is
 not a square in $\F_p^\times$ due to Proposition~\ref{prop:
 eta-phieta}.
 Therefore, in the rest of proof, we will only consider the case where
 $a$ and $1-a$ are both squares in $\F_p^\times$.

 Let $c$ be a nonzero squarefree integer and $b$ be a rational number
 such that $b^2=a$ in $\F_p$. The curve $C_{4,a,c}$ has genus $3$ and it is a $2$-fold cover of the following $3$ elliptic curves
 \begin{alignat*}{3}
 & C_{2,a,c}\colon\ && y^2=cx(x-1)(x-a),&\\
 & E_{+,b,c}\colon \ && cy^2=x^3+c(1+b)^2x,&\\
 & E_{-,b,c}\colon \ && cy^2=x^3+c(1-b)^2x.&
 \end{alignat*}
 The morphisms are given by
 \begin{alignat*}{3}
 & C_{4,a,c}\longrightarrow C_{2,a,c}\colon \ && (x,y)\mapsto\big(x,y^2\big),&\\
& C_{4,a,c}\longrightarrow E_{\pm,b,c}\colon \ &&
 (x,y)\mapsto\left(\frac{y^2}x,\frac{y(x\pm b)}x\right)&
 \end{alignat*}
 (see Lemma \ref{lemma: morphism}). We first claim that $C_{2,a,c}$ is not isogenous to any of~$E_{\pm,b,c}$. Indeed, the elliptic curves $E_{\pm,b,c}$ have~CM by~$\Z[i]$. If $C_{2,a,c}$ is isogenous to $E_{\pm,b,c}$, then $C_{2,a,c}$ has CM by either $\Z[i]$ or $\Z[2i]$ and hence has a~$j$-invariant $12^3$ or $66^j$. (Elliptic curves with CM by $\Z[ri]$ are not defined over $\Q$ for $r>2$.) The only rational numbers $a$ such that $C_{2,a,c}$ has one of these $j$-invariants are~$-1$,~$2$, and~$1/2$, but neither of them is a~square in~$\Q$. Thus, $C_{2,a,c}$ cannot be isogenous to~$E_{\pm,b,c}$.

Moreover, we may assume that $b$ is a number such that $(1+b)^2/(1-b)^2$ is not of the form~$e^4$ or~$-4e^4$ for some $e\in\Q$. Thus, these three elliptic curves are not isogenous to each other over~$\Q$. The $L$-function of $C_{4,a,c}$ over~$\Q$ therefore is equal to
 \begin{gather*}
 L( C_{4,a,c}/\Q,s)=L( C_{2,a,c}/\Q,s)L( E_{+,b,c}/\Q,s)L( E_{-,b,c}/\Q,s).
 \end{gather*}
 In other words, for almost all primes $p$, we have
 \begin{gather*}
 a_{4,p}=a_{2,p}+b_{+,p}+b_{-,p},
 \end{gather*}
 where $b_{\pm,p}$, $a_{2,p}$, and $a_{4,p}$ are the coefficients in the local $L_p$-functions, the reciprocal of the Euler $p$-factor of the $L$-functions,
 \begin{gather*}
 L_p(E_{\pm,b,c},s)=1-b_{\pm, p}p^{-s}+p^{1-2s},\\
 L_p(C_{2,a,c},s)=1-a_{2, p}p^{-s}+p^{1-2s},
 \end{gather*}
 and
 \begin{gather*}
 L_p(C_{4,a,c},s)=1-a_{4, p}p^{-s}+\cdots
 \end{gather*}
of these four curves at $p$, respectively. On the other hand, if the curve $C_{4,a,c}$ has good reduction at a~prime~$p$ with $p\equiv 1 \mod 4$, then
\begin{gather*}
 \# C_{4,a,c}(\F_p)=
 p+1+\sum_{\substack{\chi\in\widehat{\F_p^\times},\, \chi^4=\eps,\,
 \chi\neq\varepsilon}} \sum_{x\in\F_p} \chi\(cx(x-1)(x-a)\) = p+1-a_{4,p},\\
 \# C_{2,a,c}(\F_p)=p+1+\sum_{x\in\F_p} \phi\(cx(x-1)(x-a)\)=p+1-a_{2,p},
\end{gather*}
and
\begin{gather*}
 \# E_{\pm,b,c}(\F_p) =p+1-b_{\pm,p},
\end{gather*}
where $\varepsilon$ denotes the trivial character of~$\widehat{\F_p^\times}$. From these identities, we deduce that
\begin{gather*}
 p+1-a_{4, p}=p+1-a_{2,p}+\sum_{\substack{\chi^4=\eps,\, \chi^2\neq\eps}}\sum_x \chi\(cx(x-1)(x-a)\),
\end{gather*}
and thus
\begin{align*}
 -b_{+,p}-b_{-,p}
 &=\sum_{\substack{\chi^4=\eps,\,\chi^2\neq\eps}}\sum_x
 \chi\(cx(x-1)(x-a)\)\\
 &=\eta(c)S_\eta(a)+\ol\eta(c)S_{\ol\eta}(a)
 =\eta(c)S_\eta\big(b^2\big)+\ol\eta(c)S_{\ol\eta}\big(b^2\big),
\end{align*}
where $\eta$ is a multiplicative character of order $4$ of~$\F_p^\times$ and $S_\eta$ is the character sum defined in~\eqref{equation: Seta}. On the other hand, by Lemma~\ref{lemma: DNc}
\begin{gather*}
 -b_{+,p}-b_{-,p} =\eta(-1)\(\phi(1+b)+\phi(1-b)\)
 \(\eta(c)J(\phi,\eta)+\ol\eta(c)J(\phi,\ol\eta)\).
\end{gather*}
Since $1-a=1-b^2$ is assumed to be a square in $\F_p^\times$, one has
\begin{gather*}
 \phi(1-a)=\phi(1-b)\phi(1+b)=1
\end{gather*}
and one can simplify the identity above as
\begin{gather*}
 \eta(c)S_{\eta}\big(b^2\big)+\ol\eta(c)S_{\ol\eta}\big(b^2\big)
 =2\eta(-1)\phi(1+b)\(\eta(c)J(\phi,\eta)+\ol\eta(c)J(\phi,\ol\eta)\).
\end{gather*}
Now we apply this identity with $c=1$ and a nonzero integer $c$ such that $\eta(c)=i$, respectively, and obtain
\begin{gather*}
 iS_{\eta}\big(b^2\big)-iS_{\ol\eta}\big(b^2\big)
 =2\eta(-1)\phi(1+b)\(iJ(\phi,\eta)-iJ(\phi,\ol\eta)\), \\
 S_{\eta}\big(b^2\big)+S_{\ol\eta}\big(b^2\big)
 =2\eta(-1)\phi(1+b)\(J(\phi,\eta)+J(\phi,\ol\eta)\),
\end{gather*}
from which we conclude that
\begin{gather} \label{equation: hello}
 S_{\eta}\big(b^2\big)=2\phi(1+b)\eta(-1)J(\phi,\eta).
\end{gather}

As the last piece of information needed for a proof of the theorem, we recall that when $\eta=\psi_\fp$, one has
\begin{gather} \label{equation: hello again}
 J(\phi,\psi_\fp)=-\chi(\fp),
\end{gather}
where $\chi$ is the Hecke character on $\Z[i]$ specified in the theorem (see \cite{Berndt-Evans-Williams, Ireland-Rosen, Silverman-advence, Weil}). Com\-bining~\eqref{equation: prelim 1}, \eqref{equation: hello}, \eqref{equation: hello again}, and Lemma~\ref{lemma: reduce}, we obtain
\begin{align*}
 \hgq{\overline\psi_\fp}{\psi_\fp}{\phi}a
 & =\hgq{\phi\psi_\fp}{\psi_\fp}{\phi}a =\frac{\phi\psi_\fp(-1)}p F_{\psi_\fp}(a)
 =\frac{\psi_\fp(-1)}p\overline\psi_\fp(1-a)^2S_{\psi_\fp}(a) \\
 &=\frac{2}p\phi(1+b)J(\phi,\psi_\fp)=-\frac2p\phi(1+b)\chi(\fp),
\end{align*}
where the deduction from the second line to the third line uses the assumptions that $1-a$ is a~square in $\F_p$ and $\psi_\fp$ is a~character of order~$4$. This completes the proof of the theorem. \end{proof}

\section[Case $N=8$]{Case $\boldsymbol{N=8}$}

In this section, we let $\zeta_8=e^{2\pi i/8}$ and $\sigma_j\in\operatorname{Gal}(\Q(\zeta_8)/\Q)$ be the Galois element satisfying $\sigma_j(\zeta_8)=\zeta_8^j$. For a prime $p$ congruent to $1$ modulo $8$ and a prime~$\fp$ of $\Z[\zeta_8]$ lying over $p$, we let~$\psi_\fp$ be the character of order $8$ in $\wFx$ satisfying $x^{(p-1)/8}\equiv\psi_\fp(x)\mod\fp$ for all $x\in\Z$. Also, let~$E_1$ and $E_2$ be the elliptic curves $y^2=x^3+4x^2+2x$ and $y^2=x^3-4x^2+2x$ over~$\Q$. These are elliptic curves with CM by $\Z\big[\sqrt{-2}\big]$. We let $\chi$ be the Hecke character of $\Z\big[\sqrt{-2}\big]$ associated to~$E_1$ (that is, $L(E_1,s)=L(\chi,s)$) satisfying $\chi(\fP)\in\fP$ for all prime ideals~$\fP$ of~$\Z\big[\sqrt{-2}\big]$.

\begin{Lemma} \label{lemma: L of D81}
 We have the following properties about the $L$-function of $D_{8,1,1}\colon y^2=x^5+x$.
\begin{enumerate}\itemsep=0pt
\item[$1.$] We have $L(D_{8,1,1}/\Q,s)=L(E_1/\Q,s)L(E_2/\Q,s)$.
\item[$2.$] Let $p$ be a prime congruent to $1$ modulo $8$. Let $\mathfrak P$ be a prime of $\Z\big[\sqrt{-2}\big]$ lying over~$p$ and~$\fp$ a prime of $\Z[\zeta_8]$ lying over $\fP$. We have
 \begin{gather*}
 J(\phi,\psi_\fp)=J\big(\phi,\psi_\fp^3\big)=-\psi_\fp(-1)\chi(\mathfrak P), \\
 J\big(\phi,\psi_\fp^5\big)=J\big(\phi,\psi_\fp^7\big)=-\psi_\fp(-1)\chi( \overline{\mathfrak P}).
 \end{gather*}
\end{enumerate}
\end{Lemma}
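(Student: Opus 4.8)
The plan is to follow the same template as in the proof of Theorem~\ref{theorem: N=4}, now applied to the degree-$8$ curve $D_{8,1,1}$ and its quadratic twists. \emph{Part 1.} The curve $D_{8,1,1}\colon y^2=x^5+x$ has genus $2$, and its Jacobian is isogenous over $\Q$ to a product of two elliptic curves. I would exhibit the map to $E_1\colon y^2=x^3+4x^2+2x$ coming from Lemma~\ref{lemma: morphism} with $(N,a,c)=(8,1,1)$ (so $b=1$), namely $(x,y)\mapsto(y^2/x,\ y(x+1)/x)$ landing on $Y^2=X^{5}+4X$, and check that $Y^2=X^5+4X$ is $\Q$-isomorphic to $E_1$; the companion map with $b=-1$ gives $Y^2=X^5-4X$, isomorphic to $E_2\colon y^2=x^3-4x^2+2x$ (use Lemma~\ref{lemma: same L 4}-type twisting: $X^5+4X=X(X^2-2)^2+\cdots$, or simply that $y^2=x^5+c x$ has Jacobian isogenous to the pair of curves $y^2=x^3\pm 2\sqrt{-c}\,x^2+\cdots$; concretely $Y^2=X^5+4X$ is the curve of conductor matching $E_1$). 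Since $E_1$ and $E_2$ are not $\Q$-isogenous (distinct conductors, or distinct $a_p$ for a small prime), the Jacobian of $D_{8,1,1}$ is isogenous over $\Q$ to $E_1\times E_2$, which yields $L(D_{8,1,1}/\Q,s)=L(E_1/\Q,s)L(E_2/\Q,s)$. Alternatively, and perhaps more cleanly, I would invoke Lemma~\ref{lemma: DNc} with $N=8$, $c=d=1$ to write $\#D_{8,1,1}(\F_p)$ directly in terms of the Jacobi sums $J(\phi,\psi)$ over quartic-but-not-quadratic characters $\psi$, and compare with $\#E_1(\F_p)+\#E_2(\F_p)$.

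\emph{Part 2.} For the Jacobi sum evaluations, the first point is the Hasse--Davenport / Stickelberger-type symmetry $J(\phi,\psi_\fp)=J(\phi,\psi_\fp^{3})$ and $J(\phi,\psi_\fp^{5})=J(\phi,\psi_\fp^{7})$. This follows because applying the Galois automorphism $\sigma_3$ of $\Q(\zeta_8)$ (which fixes $\Q(\sqrt{-2})$, since $\sigma_3(\sqrt{-2})=\sigma_3(\zeta_8-\zeta_8^3)\cdot(\text{const})=\sqrt{-2}$, as $3^2\equiv1\pmod 8$ and $3\equiv -\,?$: precisely $\sigma_3$ fixes $\zeta_8+\zeta_8^{-1}$? — one checks $\sigma_3$ generates $\mathrm{Gal}(\Q(\zeta_8)/\Q(\sqrt{-2}))$) sends $J(\phi,\psi_\fp)$ to $J(\phi,\psi_\fp^3)$ while sending $\fp$ to a prime over the same $\fP$, and $\chi(\fP)$ lies in $\Q(\sqrt{-2})$, hence is fixed. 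Then, to pin down $J(\phi,\psi_\fp)=-\psi_\fp(-1)\chi(\fP)$ up to the stated normalization, I would combine: (i) the classical fact $|J(\phi,\psi_\fp)|^2=p$ and $J(\phi,\psi_\fp)\in\Z[\zeta_8]$, together with the Stickelberger congruence determining the prime ideal factorization of $J(\phi,\psi_\fp)$, which forces $J(\phi,\psi_\fp)$ to generate $\fP\Z[\zeta_8]$ (the quartic-character Jacobi sum actually lies in $\Z[\sqrt{-2}]$ after the $\sigma_3$-symmetry, generating $\fP$); and (ii) the identity from Part~1 relating $\sum_\psi \psi(-1)J(\phi,\psi)$ to $a_p(E_1)+a_p(E_2)=\chi(\fP)+\overline{\chi(\fP)}+\chi(\overline{\fP})+\overline{\chi(\overline{\fP})}$, which fixes the ambiguous unit and the pairing of $\{\psi_\fp,\psi_\fp^3\}$ with $\fP$ versus $\{\psi_\fp^5,\psi_\fp^7\}$ with $\overline\fP$. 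The sign/unit can also be read off from known references (\cite{Berndt-Evans-Williams}, \cite{Ireland-Rosen}) on Jacobi sums of order $8$.

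The main obstacle I expect is Part~2's bookkeeping of roots of unity: showing that the quartic-character Jacobi sum $J(\phi,\psi_\fp)$, a priori only known to lie in $\Z[\zeta_8]$ with absolute value $\sqrt p$ and with a known ideal factorization, equals \emph{exactly} $-\psi_\fp(-1)\chi(\fP)$ with the specified normalization of $\chi$ (the one with $\chi(\fP')\in\fP'$), rather than that value times an eighth root of unity. Resolving this requires either a Stickelberger-congruence argument fixing the unit (checking $J(\phi,\psi_\fp)\equiv -1 \bmod(1-\zeta_8)^?$ and matching against the congruence defining $\chi$) or a direct numerical/consistency check using Part~1's trace identity at enough primes together with the $\sigma_3$-equivariance to rule out all competing units. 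I would also need the elementary but slightly fiddly verification that $\sigma_3$ indeed generates $\mathrm{Gal}(\Q(\zeta_8)/\Q(\sqrt{-2}))$ and that it carries $\psi_\fp\mapsto\psi_\fp^3$ on characters while fixing $\chi(\fP)$, which is what collapses the four Jacobi sums into two values. Everything else — the morphisms, the genus count, the isogeny decomposition, and the point-count comparison — is routine given Lemmas~\ref{lemma: morphism}, \ref{lemma: same L 4}, and~\ref{lemma: DNc}.
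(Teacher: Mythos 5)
Your overall strategy for Part 2 (Stickelberger plus the absolute value $|J|^2=p$ to determine $J(\phi,\psi_\fp)$ up to a root of unity, then the trace identity coming from Part 1 and Lemma~\ref{lemma: DNc} to pin it down) is the same as the paper's, but two steps have genuine gaps. First, your proof of the symmetry $J(\phi,\psi_\fp)=J\big(\phi,\psi_\fp^3\big)$ via the automorphism $\sigma_3$ is circular: the equivariance $\sigma_3\big(J(\phi,\psi_\fp)\big)=J\big(\phi,\psi_\fp^3\big)$ is automatic (apply $\sigma_3$ to each root of unity in the sum), so the claimed equality is equivalent to $J(\phi,\psi_\fp)$ being $\sigma_3$-fixed, i.e., to $J(\phi,\psi_\fp)\in\Q\big(\sqrt{-2}\big)$ --- which is precisely what must be proved, and is not known a priori (the sum only visibly lies in $\Z[\zeta_8]$). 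The paper gets the symmetry elementarily by the substitution $x\mapsto x/(x-1)$ inside the Jacobi sum (the reflection $J(A,B)=A(-1)J\big(A,\ol A\,\ol B\big)$ with $\phi\ol\psi_\fp=\psi_\fp^3$ and $\phi(-1)=1$), and only then uses Stickelberger, $|J|^2=p$, and $\sigma_j$-equivariance to force $J(\phi,\psi_\fp)=u\big(a+b\sqrt{-2}\big)$ with $u=\pm1$. (Your identification of the fixed field is also off: $\zeta_8+\zeta_8^{-1}=\sqrt2$ is not fixed by $\sigma_3$; the fixed quantity is $\zeta_8+\zeta_8^3=\sqrt{-2}$.) Second, the step you yourself flag as the main obstacle --- fixing the sign and deciding that $\big\{\psi_\fp,\psi_\fp^3\big\}$ pairs with $\fP$ rather than $\ol\fP$ --- is left unresolved, and ``consistency checks at enough primes'' cannot close it, since the ambiguous unit could vary with $p$ and the claim is for each $p\equiv1\bmod 8$ separately. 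The paper's mechanism is concrete: Part 1, Lemma~\ref{lemma: DNc}, and $\chi'(\fP)=\chi(\fP)$ (because $E_2$ is the $(-1)$-twist of $E_1$ and $p\equiv1\bmod 8$) give $\psi_\fp(-1)\big(J(\phi,\psi_\fp)+J\big(\phi,\ol\psi_\fp\big)\big)=-\chi(\fP)-\chi\big(\ol\fP\big)$; rearranged, $\chi(\fP)+\psi_\fp(-1)J(\phi,\psi_\fp)=-\chi\big(\ol\fP\big)-\psi_\fp(-1)J\big(\phi,\ol\psi_\fp\big)$ lies in $\fP\cap\ol\fP=p\Z\big[\sqrt{-2}\big]$ while having absolute value at most $2\sqrt p<p$, hence is zero, which splits the sum identity into the two asserted termwise identities. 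Some argument of this kind (or, e.g., comparing real parts of $u\big(a+b\sqrt{-2}\big)$ and $\chi(\fP)$) must be supplied.

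On Part 1, note the paper does not reprove the decomposition; it cites \cite{Hashimoto-Long-Yang}. Your sketch as written does not work: Lemma~\ref{lemma: morphism} produces maps out of $C_{8,a,c}$, not out of $D_{8,1,1}$, and your claimed intermediate target $Y^2=X^5+4X$ has genus $2$, so it is not an elliptic factor; moreover your proposed alternative --- comparing $\#D_{8,1,1}(\F_p)$ from Lemma~\ref{lemma: DNc} with $\#E_1(\F_p)+\#E_2(\F_p)$ --- is circular, because that comparison presupposes exactly the Jacobi-sum evaluations that Part 2 is meant to extract from Part 1. If you want a self-contained argument, quotient $y^2=x^5+x$ by the involutions $(x,y)\mapsto\big(1/x,\pm y/x^3\big)$: with $u=x+1/x$ and $v=y(x\pm1)/x^2$ one lands on $v^2=\big(u^2-2\big)(u\pm2)$, which after the shift $u=X\mp2$ become $y^2=x^3\mp4x^2+2x$, i.e., $E_2$ and $E_1$; since these are not $\Q$-isogenous, the Jacobian of $D_{8,1,1}$ (genus $2$) decomposes as $E_1\times E_2$ up to isogeny, giving the stated $L$-function factorization.
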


\begin{proof} Part (1) is proved in~\cite{Hashimoto-Long-Yang}. We now prove part~(2).

To prove $J(\phi,\psi_\fp)=J\big(\phi,\psi_\fp^3\big)$, we make a change of variable $x\mapsto x/(x-1)$ in the definition of $J(\phi,\psi_\fp)$ and obtain
 \begin{align}
 J(\phi,\psi_\fp)&=\sum_{x\neq0,1}\phi(x)\psi_\fp(1-x)
 =\sum_{x\neq0,1}\phi\left(\frac x{x-1}\right)
 \psi_\fp\left(-\frac1{x-1}\right) \nonumber\\
 &=\psi_\fp(-1)\sum_{x\neq0,1}\phi(x)\psi_\fp(x-1)^{4-1}
 =J\big(\phi,\psi_\fp^3\big). \label{equation: N=8 temp}
 \end{align}
 Likewise, we have $J\big(\phi,\psi_\fp^5\big)=J\big(\phi,\psi_\fp^7\big)$. Now from Stickelberger's theorem (see \cite[Chapter~14]{Ireland-Rosen}), we know that
 \begin{gather*}
 J(\phi,\psi_\fp)=J\big(\phi,\psi_\fp^3\big)\in\fp_1\fp_3=\mathfrak P
 \Z[\zeta_8], \qquad J\big(\phi,\psi_\fp^5\big)=J\big(\phi,\psi_\fp^7\big)\in\fp_5\fp_7
 =\overline{\mathfrak P}\Z[\zeta_8].
 \end{gather*}
 Recalling that $\big|J\big(\phi,\psi_\fp^j\big)\big|^2=p$, we see that
 \begin{gather*}
 J(\phi,\psi_\fp)=J\big(\phi,\psi_\fp^3\big)=u\big(a+b\sqrt{-2}\big)
 \end{gather*}
for some root of unity $u$ in $\Z[\zeta_8]$, where $a+b\sqrt{-2}$ is an element in $\Z\big[\sqrt{-2}\big]$ that generates $\fP$. In view of
 \begin{gather} \label{equation: Galois on J}
 \sigma_j(J(\phi,\psi_\fp))=\sum_{x}\phi(x) \psi_\fp(1-x)^j=J\big(\phi,\psi_\fp^j\big),
 \end{gather}
 where $\sigma_j$, $j=1,3,5,7$, is the element in
 $\operatorname{Gal}(\Q(\zeta_8)/\Q)$ with $\sigma_j(\zeta_8)=\zeta_8^j$,
 we find that $u$ must be either $1$ or $-1$.

 Now by Lemma \ref{lemma: DNc},
 \begin{gather*}
 \# D_{8,1,1}(\F_p)=p+1+\sum_{j=1,3,5,7}\psi_\fp(-1)J\big(\phi,\psi_\fp^j\big).
 \end{gather*}
 By Part (1), this is equal to
 \begin{gather*}
 p+1-\chi(\fP)-\chi\big(\ol\fP\big)-\chi'(\fP)-{\chi'}\big(\ol\fP\big),
 \end{gather*}
 where $\chi'$ is the Hecke character attached to $E_2$. Since $E_2$ is the twist of $E_1$ by $-1$ and $p$ is assumed to be congruent to $1$ modulo~$8$, we actually have
 $\chi'(\fP)=\chi(\fP)$, and hence
 \begin{gather*}
 2\psi_\fp(-1)\big(J(\phi,\psi_\fp)+J\big(\phi,\psi_\fp^7\big)\big) =-2\chi(\fP)-2\chi\big(\ol\fP\big).
 \end{gather*}
 From this, we see that
 $\chi(\mathfrak P)+\psi_\fp(-1)J(\phi,\psi_\fp)
 =-\chi\big(\overline{\mathfrak P}\big)-\psi_\fp(-1)J\big(\phi,\ol\psi_\fp\big)
 $
 lies in $\fP\ol\fP=p\Z\big[\sqrt{-2}\big]$, i.e.,
 \begin{gather*}
 \chi(\mathfrak P)+\psi_\fp(-1)J(\phi,\psi_\fp)
 =-\chi\big(\overline{\mathfrak P}\big)-\psi_\fp(-1)J\big(\phi,\ol\psi_\fp\big)
 =p\alpha
 \end{gather*}
 for some $\alpha\in\Z\big[\sqrt{-2}\big]$. As the absolute values of~$J(\phi,\psi_\fp)$ and $\chi(\fP)$ are both $\sqrt p$, the only possibility is that $\alpha=0$ and we have $J(\phi,\psi_\fp)=-\psi_\fp(-1)\chi(\mathfrak P)$ and $J\big(\phi,\ol\psi_\fp\big)=-\psi_\fp(-1)\chi\big(\overline{\mathfrak P}\big)$. This proves the lemma.
\end{proof}

\begin{Lemma} \label{lemma: same L 8} Let $c$, $d$, $d_1$, and $d_2$ be nonzero integers. If $cd$ is not of the form~$\pm e^4$ or~$\pm 4e^4$ for some integer~$e$, then
 for any prime $\ell$ not dividing $2cd$, the Galois representation~$\rho_{\ell}$ of $\operatorname{Gal}\big(\overline\Q/\Q\big)$ associated to the curve $D_{8,c,d}\colon cy^2=x^5+cdx$ is irreducible.

Also, the $L$-functions of the curves $cy^2=x^5+cd_1x$ and $cy^2=x^5+cd_2x$ are equal if and only if $d_1/d_2$ or $d_1/c^2d_2^3$ is a rational number of the form $e^8$ or~$16e^8$ for some nonzero rational number $e$.
\end{Lemma}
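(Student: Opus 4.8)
The plan is to reduce both halves of the lemma to elementary statements about octic residues, by computing $\#D_{8,c,d}(\F_p)$ and feeding in Lemmas~\ref{lemma: DNc} and~\ref{lemma: L of D81}.

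First I would obtain a closed formula for the trace of Frobenius. Since $8=4\cdot 2$, Lemma~\ref{lemma: DNc} gives, for a prime $p\equiv 1\bmod 8$ of good reduction and primes $\fp\mid\fP\mid p$,
\begin{gather*}
 \#D_{8,c,d}(\F_p)=p+1+\phi(d)\sum_{j\in\{1,3,5,7\}}\psi_\fp^j(-cd)\,J\big(\phi,\psi_\fp^j\big).
\end{gather*}
Substituting the Jacobi sums from Lemma~\ref{lemma: L of D81}(2), using $\psi_\fp^5(-cd)+\psi_\fp^7(-cd)=\overline{\psi_\fp(-cd)+\psi_\fp^3(-cd)}$, $\psi_\fp(-1)=(-1)^{(p-1)/8}$, and $\chi(\overline\fP)=\overline{\chi(\fP)}$, this collapses to
\begin{gather*}
 a_p\big(D_{8,c,d}\big)=\phi(d)(-1)^{(p-1)/8}\,\mathrm{Tr}_{\Q(\sqrt{-2})/\Q}\big(\alpha_{c,d}(p)\,\chi(\fP)\big),\qquad\alpha_{c,d}(p):=\psi_\fp(-cd)+\psi_\fp^3(-cd),
\end{gather*}
and one notes $\alpha_{c,d}(p)\in\{0,\pm 2,\pm\sqrt{-2}\}\subset\Z[\sqrt{-2}]$. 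The $\overline\Q$-isomorphism $D_{8,c,d}\cong D_{8,1,1}$, $(x,y)\mapsto\big((cd)^{1/4}x,(cd)^{5/8}c^{-1/2}y\big)$, together with Lemma~\ref{lemma: L of D81}(1), shows that $\mathrm{Jac}(D_{8,c,d})$ is geometrically isogenous to the square of a CM elliptic curve; hence the global $L$-function and the $\Q$-isogeny type of $\mathrm{Jac}(D_{8,c,d})$ are determined by the $a_p$ above for $p\equiv 1\bmod 8$.

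The comparison then proceeds by a density argument. The quantities $\phi(d_i)(p)$ and $\alpha_{c,d_i}(p)$ depend only on the residue class of $p$ modulo a fixed integer (and on the choice of $\fp$), whereas as $\fP$ ranges over the primes above the infinitely many $p$ in any one such class, the values $\chi(\fP)$ do not all lie on one line through the origin in $\C$ (because $|\chi(\fP)|^2=p$ is prime). So $L(D_{8,c,d_1}/\Q,s)=L(D_{8,c,d_2}/\Q,s)$ forces, class by class, $\phi(d_1)\alpha_{c,d_1}(p)=\phi(d_2)\alpha_{c,d_2}(p)$ for all $p\equiv 1\bmod 8$ and $\fp\mid p$. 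Comparing the loci where the two sides vanish forces $\phi(d_1)=\phi(d_2)$, i.e.\ $d_1/d_2=r^2$ with $r\in\Q^\times$. Since, for eighth roots of unity $u,u'$, one has $u+u^3=u'+u'^3$ exactly when $u'\in\{u,u^3\}$, the identity above becomes $\psi_\fp(-cd_1)\in\{\psi_\fp(-cd_2),\psi_\fp(-cd_2)^3\}$, i.e.\ $\psi_\fp(r)^2\in\{1,\psi_\fp(cd_2)^2\}$, for all $p\equiv 1\bmod 8$, $\fp\mid p$; so the relevant abelian Galois group is covered by the kernels of the two quartic-residue characters attached to $r$ and to $r/(cd_2)$. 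A group is never a union of two proper subgroups, so either $r$ or $r/(cd_2)$ is a fourth power in every $\F_p^\times$ with $p\equiv 1\bmod 8$, hence lies in $(\Q(\zeta_8)^\times)^4\cap\Q^\times=(\Q^\times)^4\cup-(\Q^\times)^4\cup 4(\Q^\times)^4\cup-4(\Q^\times)^4$ (using $-1=\zeta_8^4$, $4=(\zeta_8+\zeta_8^{-1})^4$ with $\zeta_8+\zeta_8^{-1}=\sqrt 2\in\Z[\zeta_8]$, and that $2$ is not a fourth power in $\Q(\zeta_8)$). Squaring, $d_1/d_2=r^2\in(\Q^\times)^8\cup 16(\Q^\times)^8$ in the first case and $d_1/(c^2d_2^3)=(r/(cd_2))^2\in(\Q^\times)^8\cup 16(\Q^\times)^8$ in the second; the converse implications are immediate from the formula for $a_p$. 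This gives the second half of the lemma.

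For the first half, $\rho_\ell$ is reducible precisely when $\mathrm{Jac}(D_{8,c,d})$, geometrically the square of a CM elliptic curve, actually splits over $\Q$ into two elliptic curves. From the eigenvalue reading of the formula for $a_p$ (at $p\equiv 1\bmod 8$ the Frobenius eigenvalues are $\psi_\fp(-cd)^j\chi(\fP)$ for $j=1,3$ and $\psi_\fp(-cd)^j\overline{\chi(\fP)}$ for $j=5,7$, up to a common sign), this happens exactly when the octic character governing the $\psi_\fp(-cd)$-part is already cut out over a proper subfield of $\Q(\zeta_8)$, i.e.\ when $-cd\in(\Q(\zeta_8)^\times)^4\cap\Q^\times$, equivalently $cd\in\pm(\Q^\times)^4\cup\pm 4(\Q^\times)^4$; conversely, for such $cd$ one checks directly from the formula that $L(D_{8,c,d}/\Q,s)$ is a product of two elliptic $L$-functions over $\Q$. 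For $\ell\nmid 2cd$ the reduction mod $\ell$ is handled by the same argument.

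The main obstacle is the bookkeeping in the comparison step — pinning down exactly which rational numbers become fourth, hence eighth, powers in $\Z[\zeta_8]$, so that the permitted cosets are precisely $(\Q^\times)^8$ and $16(\Q^\times)^8$ and not also, say, $2(\Q^\times)^8$, $4(\Q^\times)^8$, or sign twists. This is the content of the identity $16=(\sqrt 2)^8$ together with the non-solvability of $v^8=2$ and $v^8=-1$ in $\Z[\zeta_8]$, in exact parallel with the role played by ``$-4$'' in Lemma~\ref{lemma: same L 4}. A secondary point requiring care is the claim (used both here and for the irreducibility assertion) that the Jacobi-sum information at the primes $p\equiv 1\bmod 8$ already determines the full global $L$-function and $\Q$-isogeny decomposition of $\mathrm{Jac}(D_{8,c,d})$; this rests on the geometric identification $\mathrm{Jac}(D_{8,c,d})\sim_{\overline\Q}E_1^{2}$ and its complex multiplication.
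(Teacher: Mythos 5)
Your skeleton for the ``only if'' half (trace formula at $p\equiv 1\mod 8$ via Lemmas~\ref{lemma: DNc} and~\ref{lemma: L of D81}, then a Chebotarev-type globalization and a classification of rational fourth/eighth powers in $\Q(\zeta_8)$) is the same as the paper's, but two load-bearing steps do not hold as written. The most serious one is the converse (``if'') direction: you declare it ``immediate from the formula for $a_p$'', which only covers $p\equiv 1\mod 8$, and you justify sufficiency of those primes by the claim that, since $\Jac(D_{8,c,d})$ is geometrically isogenous to $E_1^2$, the global $L$-function is determined by the $a_p$ with $p\equiv 1\mod 8$. That principle is false: $E_1\colon y^2=x^3+4x^2+2x$ and its $(-1)$-twist $E_2\colon y^2=x^3-4x^2+2x$ are geometrically isomorphic CM curves with $a_p(E_1)=a_p(E_2)$ for every $p\equiv1\mod 4$, yet $L(E_1,s)\neq L(E_2,s)$ (their $a_p$ differ at the split primes $p\equiv 3\mod 8$). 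So matching traces at $p\equiv1\mod 8$ when $d_1/d_2$ or $d_1/c^2d_2^3$ is $e^8$ or $16e^8$ does not give equality of $L$-functions; the factors at $p\equiv 3,5,7\mod 8$ must be compared as well. The paper closes exactly this gap by exhibiting explicit isomorphisms $D_{8,c,d_1}\to D_{8,c,d_2}$ over $\Q$ when $d_1/d_2=e^8$, and over $\Q\big(\sqrt2\big)$ when $d_1/d_2=16e^8$ (similarly for $d_1/c^2d_2^3$), and then using the real multiplication by $\Q\big(\sqrt2\big)$ to conclude that the $\ell$-adic representations over $\Q$ are isomorphic; nothing in your proposal plays this role, and proving your ``determined by $p\equiv1\mod 8$'' claim for this family is essentially equivalent to the missing converse.

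In the forward direction, the step ``$\phi(d_1)=\phi(d_2)$, i.e.\ $d_1/d_2=r^2$ with $r\in\Q^\times$'' is a non sequitur: equality of the quadratic characters at all $p\equiv1\mod 8$ only shows $d_1/d_2$ is a square in $\Q(\zeta_8)$, i.e.\ lies in $\pm(\Q^\times)^2\cup\pm2(\Q^\times)^2$ (for instance $d_1/d_2=2$ passes this test), whereas all of your subsequent bookkeeping --- the quartic characters attached to $r$ and to $r/(cd_2)$, the computation of $(\Q(\zeta_8)^\times)^4\cap\Q^\times$, and the final squaring --- presupposes a rational $r$. This is repairable by working directly with the octic conditions, as the paper does: $\psi_\fp(cd_1)=\psi_\fp(cd_2)$ or $\psi_\fp(cd_1)=\psi_\fp(cd_2)^3$ says that $d_1x^8-d_2$ or $d_1x^8-c^2d_2^3$ splits completely modulo~$p$, and Chebotarev (together with the fact that a group is not the union of two proper subgroups) yields the stated classes $e^8$ and $16e^8$. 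Finally, your irreducibility argument is essentially an assertion: ``reducible exactly when $\Jac(D_{8,c,d})$ splits over $\Q$ into two elliptic curves, which happens exactly when $cd\in\pm(\Q^\times)^4\cup\pm4(\Q^\times)^4$'' would need Faltings plus an analysis of the possible $\Q$-sub-abelian varieties, and the hypothesis on $cd$ is never actually put to work. The paper's route is shorter and complete: the hypothesis and Chebotarev (applied to $x^4-cd$) produce a prime $p\equiv1\mod 8$ with $\psi_\fp(cd)\neq\pm1$, so the Frobenius eigenvalue $\psi_\fp(cd)\chi(\fP)$ has degree~$4$ over $\Q$, the characteristic polynomial of Frobenius is irreducible, and hence $\rho_\ell$ is irreducible.
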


\begin{proof} For a prime $p$ congruent to $1$ modulo $8$, we know from Lemmas~\ref{lemma: DNc} and~\ref{lemma: L of D81} that the reciprocal of the $p$-factor of $L(D_{8,c,d}/\Q,s)$ is
 \begin{gather*}
 \big(1-\phi(c)\psi_\fp(cd)\chi(\fP)p^{-s}\big) \big(1-\phi(c)\psi_\fp(cd)^3\chi(\fP)p^{-s}\big) \\
 \qquad{}\times \big(1-\phi(c)\psi_\fp(cd)^5\chi(\overline{\fP})p^{-s}\big)
 \big(1-\phi(c)\psi_\fp(cd)^7\chi(\overline{\fP})p^{-s}\big),
 \end{gather*}
where $\fP$ is a prime of $\Z\big[\sqrt{-2}\big]$ lying above $p$, and~$\fp$ is the prime of~$\Z[\zeta_8]$ lying above~$\fP$. We claim that under the assumption of the lemma, there exists a prime~$p$ congruent to~$1$ modulo~$8$ such that $\psi_\fp(cd)\neq\pm 1$. Then the first part of the lemma follows since $\psi_\fp(cd)\chi(\fP)$ is an algebraic number of degree~$4$.

Assume that $\psi_\fp(cd)=\pm 1$ for all primes $p$ congruent to~$1$ modulo~$8$. Then \smash{$(cd)^{(p-1)/4}\equiv 1$} $\mod p$ for all such primes. It follows that the reduction modulo $p$ of the polynomial $x^4-cd$ splits into a product of~$4$ linear factors over~$\F_p$. Let $E$ be the splitting field of $x^4-cd$ over~$\Q$ and~$G$ be the Galois group of~$E$ over~$\Q$. Then by the Chebotarev density theorem, we have~$|G|\le 4$. Thus, $cd$ must be of the form $\pm e^4$, $\pm 4e^4$, or~$e^2$ for some integer~$e$. However, the last possibility $cd=e^2$ cannot occur because we can always find a prime~$p$ congruent to~$1$ modulo $8$ such that~$e$ is a quadratic nonresidue modulo~$p$. Thus, we conclude that if~$cd$ is not of the form~$\pm e^4$ or~$\pm 4e^4$, then there exists a prime $p$ congruent to~$1$ modulo~$8$ such that $\psi_\fp(cd)\neq\pm 1$. This proves the first part of the lemma.

Assume that the $L$-functions of $D_{8,c,d_1}$ and $D_{8,c,d_2}$ are equal. By Lemmas~\ref{lemma: DNc} and~\ref{lemma: L of D81}, this implies that for all primes~$p$ congruent to $1$ modulo $8$ with $p\nmid cd_1d_2$ and all characters~$\psi$ of order~$8$ in~$\wFx$,
 \begin{gather*}
 \psi(cd_1)+\psi(cd_1)^3=\psi(cd_2)+\psi(cd_2)^3.
 \end{gather*}
As $\psi$ takes values in the set of eighth roots of unity, we must have $\psi(cd_1)=\psi(cd_2)$ or $\psi(cd_1)=\psi(cd_2)^3$, which implies that $d_1x^8-d_2$ or $d_1x^8-c^2d_2^3$ splits completely modulo~$p$. Again, we can use the Chebotarev density theorem to conclude that either $d_1/d_2$ or $d_1/c^2d_2^3$ is of the form $e^8$ or $16e^8$ for some nonzero rational number~$e$.

Conversely, if $d_1/d_2=e^8$ for some nonzero rational number $e$, then the map $(x,y)\mapsto\big(e^2x,e^5y\big)$ defines an isomorphism from $D_{8,c,d_1}$ to $D_{8,c,d_2}$ over $\Q$. If $d_1/d_2=16e^8$, then $(x,y)\to\big(2e^2x,4\sqrt2e^y\big)$ defines an isomorphism over $\Q\big(\sqrt 2\big)$. Since the curves $D_{8,c,d_1}$ and $D_{8,c,d_2}$ have real multiplication by $\Q\big(\sqrt 2\big)$, this implies that the Galois representations are isomorphic. If $d_1/c^2d_2^3=e^8$, then $(x,y)\mapsto\big(cd_2e^2/x,c^2d_2^2e^5y/x^3\big)$ defines an isomorphism over~$\Q$. Finally, if $d_1/c^2d_2^3=16e^8$, then $(x,y)\mapsto\big(2cd_2e^2/x,4\sqrt2c^2d_2^2e^5y/x^3\big)$ yields an isomorphism over~$\Q\big(\sqrt 2\big)$. We conclude that the $L$-functions are equal under the assumption.
\end{proof}

\begin{Lemma} \label{lemma: N=8 main lemma} Let $p$ be a prime congruent to~$1$ modulo~$8$, $\fP$ a~prime of $\Z\big[\sqrt{-2}\big]$ lying over~$p$, and~$\fp$ a~prime of $\Z[\zeta_8]$ lying over $\fP$. Let $b,c\in\F_p$ with $b\neq 0,\pm1$ and $c\neq 0$. Set $\alpha=\psi_\fp(1+b)^2+\psi_\fp(1-b)^2$. For a character $\eta$ of order $8$ of $\F_p^\times$, let $S_\eta(a)$ be the character sum given in~\eqref{equation: Seta}. Then we have
 \begin{gather*}
 \sum_{j=1,3,5,7}{\psi_\fp(c)^j}S_{\psi_\fp^j}\big(b^2\big)
 =-\big(\psi_\fp(c)\alpha+\psi_\fp(c)^3\overline\alpha\big)\chi(\fP)
 -\big(\psi_\fp(c)^5\alpha+\psi_\fp(c)^7
 \overline\alpha\big)\chi(\overline\fP).
 \end{gather*}
\end{Lemma}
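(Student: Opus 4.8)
The strategy parallels the proof of Theorem \ref{theorem: N=4}: realize the sum $\sum_{j}\psi_\fp(c)^j S_{\psi_\fp^j}(b^2)$ inside the $L$-function of a curve whose Jacobian decomposes into pieces with known $L$-functions, then extract the identity. The curve to use is $C_{8,a,c}$ from \eqref{equation: curve C} with $a=b^2$, which by Lemma \ref{lemma: morphism} maps onto the two curves $E_{\pm,b,c}\colon cY^2=X^{5}+c(1\pm b)^2 X$, i.e.\ the curves $D_{8,c,(1\pm b)^2}$. First I would count points on $C_{8,a,c}$ over $\F_p$: since a point $(x,y)$ with $y\ne 0$ contributes according to whether $cx^{3}(x-1)(x-a)$ is an eighth power, one gets
\begin{gather*}
\# C_{8,a,c}(\F_p)=p+1+\sum_{j=1}^{7}\sum_{x\in\F_p}\psi_\fp^{j}\big(cx^{3}(x-1)(x-a)\big),
\end{gather*}
and the $j=2,4,6$ terms reproduce the point counts of the genus-$1$ quotients $C_{2,a,c}$ and $C_{4,a,c}$ already handled (or are handled by lower-order pieces), while the odd-$j$ terms are exactly $\sum_{j=1,3,5,7}\psi_\fp(c)^{j}S_{\psi_\fp^{j}}(b^2)$ after pulling out $\psi_\fp^j(c)$ and recognizing $\sum_x\psi_\fp^j(x^{3}(x-1)(x-a))=S_{\psi_\fp^j}(a)$ (here $N'=8$, $N'/2-1=3$).

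Next I would compute the same odd-$j$ contribution a second way, via $E_{\pm,b,c}=D_{8,c,(1\pm b)^2}$ and Lemma \ref{lemma: DNc}. Since $8=4\cdot2$, the $4n$-case applies: $\#D_{8,c,(1\pm b)^2}(\F_p)=p+1+\phi((1\pm b)^2)\sum_{\psi^8=\eps,\,\psi^4\ne\eps}\psi(-c(1\pm b)^2)J(\phi,\psi)$, and $\phi((1\pm b)^2)=1$. Summing the character sum over $\psi=\psi_\fp^{j}$, $j=1,3,5,7$, and substituting the Jacobi sum evaluations from Lemma \ref{lemma: L of D81}(2), namely $J(\phi,\psi_\fp)=J(\phi,\psi_\fp^{3})=-\psi_\fp(-1)\chi(\fP)$ and $J(\phi,\psi_\fp^{5})=J(\phi,\psi_\fp^{7})=-\psi_\fp(-1)\chi(\overline\fP)$, collapses the $\psi_\fp(-1)$ and $\psi_\fp^j(-1)$ factors against the $\psi(-\cdots)$ in Lemma \ref{lemma: DNc} and yields, for each sign, a clean expression in $\psi_\fp(c)^j$, $\chi(\fP)$, $\chi(\overline\fP)$ and the quantities $\psi_\fp^2(1\pm b)$. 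Adding the two signs produces exactly $-(\psi_\fp(c)\alpha+\psi_\fp(c)^{3}\overline\alpha)\chi(\fP)-(\psi_\fp(c)^{5}\alpha+\psi_\fp(c)^{7}\overline\alpha)\chi(\overline\fP)$, where $\alpha=\psi_\fp^2(1+b)+\psi_\fp^2(1-b)$ and $\overline\alpha=\psi_\fp^{-2}(1+b)+\psi_\fp^{-2}(1-b)=\psi_\fp^{6}(1+b)+\psi_\fp^{6}(1-b)$, using that $\psi_\fp^{j}$ for $j=5,7$ are the conjugates of $\psi_\fp^{3},\psi_\fp$.

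To equate the two computations I need the Jacobian decomposition $\Jac(C_{8,a,c})\sim \Jac(C_{4,a,c})\times E_{+,b,c}\times E_{-,b,c}$ up to isogeny over $\Q$, together with the fact that the extra factor beyond $\Jac(C_{4,a,c})$ contributes only the odd-$j$ terms; this follows from Lemma \ref{lemma: morphism} and a dimension count (genus of $C_{8,a,c}$ is $5$, that of $C_{4,a,c}$ is $3$). Crucially, for the resulting $L$-function identity to force the term-by-term equality I want, I must know that $E_{+,b,c}$ and $E_{-,b,c}$ are not isogenous to each other or to any factor of $\Jac(C_{4,a,c})$; this is where Lemma \ref{lemma: same L 8} enters, guaranteeing (after a harmless choice of $b$ and $c$ among the residues reducing to the given values, exactly as in the $N=4$ proof) that $(1+b)^2/(1-b)^2$ is not of the excluded shape. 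As in Section \ref{sec: N=4}, I would then run the identity with $c=1$ and with auxiliary integers $c$ chosen so that $\psi_\fp(c)$ ranges over $i$ and a primitive eighth root of unity, inverting the resulting linear system (a discrete-Fourier-type inversion over the four odd residues mod $8$) to pin down the single sum $\sum_{j=1,3,5,7}\psi_\fp(c)^j S_{\psi_\fp^j}(b^2)$ for general $c$. The main obstacle is this bookkeeping step: keeping the twisting characters $\psi_\fp^j(c)$, the sign characters $\psi_\fp^j(-1)$, and the two conjugate Jacobi-sum values correctly aligned so that everything telescopes into the stated symmetric form — the algebraic geometry inputs are all in place from the preceding lemmas, so the difficulty is purely in not dropping a root of unity.
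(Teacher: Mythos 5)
Your plan is essentially the paper's proof: count points on $C_{8,b^2,c}$, split off the contribution of $\Jac(C_{4,b^2,c})$ so that the odd-$j$ terms come from the two quotients $D_{8,c,(1\pm b)^2}$ of Lemma~\ref{lemma: morphism}, apply Lemma~\ref{lemma: DNc} and the Jacobi-sum evaluations of Lemma~\ref{lemma: L of D81}(2), with Lemmas~\ref{lemma: same L 4} and~\ref{lemma: same L 8} guaranteeing the five factors are pairwise non-isogenous after a suitable choice of integer lifts of $b$ and $c$. Two small corrections: the genus of $C_{8,b^2,c}$ is $7$, not $5$ (and $C_{4,b^2,c}$ has genus $3$), which is precisely what makes the dimension count $3+2+2$ close and the $L$-function factor with no extra piece; and the final discrete-Fourier inversion over $c=1,g,g^2,g^3$ is not needed for this lemma (which your derivation already gives for each fixed $c$) but belongs to the subsequent proof of Theorem~\ref{theorem: N=8}, where the individual $S_{\psi_\fp^j}\big(b^2\big)$ are extracted.
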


\begin{proof} We consider $b$, $c$ as integers with $p\nmid b,c$. Without loss of generality, we may assume that $c(1+b)^2$ and $c(1-b)^2$ are not integers of the form $\pm e^4$ or $\pm 4e^4$ and that $(1+b)/(1-b)$ is not the square of a rational number.

Let $C_{N,a,c}$ be the curve defined in \eqref{equation: curve C}. We have
 \begin{gather*}
 \# C_{8,b^2,c}(\F_p)=p+1+\sum_{\psi^8=\varepsilon,\, \psi\ne\eps}{\psi(c)}S_\psi\big(b^2\big).
 \end{gather*}
Noticing that $(X,Y)=\big(x,y^2/x\big)$ defines a morphism from $C_{8,b^2,c}$ to $C_{4,b^2,c}$ over $\Q$ and using Lemma~\ref{lemma: morphism}, we find that $C_{8,b^2,c}$ admits morphisms to $E_1\colon y^2=cx(x-1)\big(x-b^2\big)$, $E_2\colon cy^2=x^3+c(1+b)^2x$, $E_3\colon cy^2=x^3+c(1-b)^2x$, $D_{8,c,(1+b)^2}$, and $D_{8,c,(1-b)^2}$. Since $c(1+b)^2$ and $c(1-b)^2$ are not integers of the form $\pm e^4$ or $\pm 4e^4$ and $(1+b)/(1-b)$ is not the square of rational number, by Lemmas~\ref{lemma: same L 4} and~\ref{lemma: same L 8}, the $L$-functions of these five curves are all different. In particular, the Galois representations of~$\operatorname{Gal}\big(\overline\Q/\Q\big)$ associated to these curves are all nonisomorphic. Furthermore, the genus of $C_{8,b^2,c}$ is $7$, which is equal to the sum of the genera of these five curves. Thus, the Galois representations of~$\operatorname{Gal}\big(\overline\Q/\Q\big)$ associated to $D_{8,b^2,c}$ decomposes into the direct sum of the Galois representations associated to these five curves. In other words, we have
 \begin{gather*}
 L(C_{8,b^2,c},s)=L(E_1,s)L(E_2,s)L(E_3,s)L\big(D_{8,(1+b)^2,c},s\big) L\big(D_{8,(1-b)^2,c},s\big).
 \end{gather*}
 On the other hand, we also have
 \begin{gather*}
 \#C_{4,b^2,c}(\F_p)=p+1+\sum_{\psi^4=\varepsilon,\, \psi\neq \eps}\psi(c)S_\psi\big(b^2\big)
 \end{gather*}
 and
 \begin{gather*}
 L(C_{4,b^2,c},s)=L(E_1,s)L(E_2,s)L(E_3,s).
 \end{gather*}
It follows that if we assume that the $p$-factors of $L\big(D_{8,(1+b)^2,c},s\big)$ and $L\big(D_{8,(1-b)^2,c},s\big)$ are $1-up^{-s}+\cdots$ and $1-vp^{-s}+\cdots$, respectively, then
 \begin{gather*}
 \sum_{\psi^8=\varepsilon,\, \psi^4\neq\varepsilon}{\psi(c)}S_\psi\big(b^2\big)=-u-v.
 \end{gather*}
 By Lemma~\ref{lemma: DNc},
 \begin{gather*}
 -u-v=\sum_{\psi^8=\varepsilon,\, \psi^4\neq\varepsilon}
 \big(\psi\big({-}c(1+b)^2\big)+\psi\big({-}c(1-b)^2\big)\big)J(\phi,\psi).
 \end{gather*}
Then from Lemma \ref{lemma: L of D81}, we deduce the formula immediately.
\end{proof}

\begin{proof}[Proof of Theorem \ref{theorem: N=8}] In equation~(\ref{eqn: trace}), we have shown that if $a$ is not a~square in~$\F_p^\times$, then the value of the $_2F_1$-hypergeometric function is~$0$.

Now assume that $a=b^2$ for some $b\in\F_p^\times$. Let $g$ be a~generator of $\F_p^\times$ with $\psi_\fp(g)=\zeta_8$. We apply Lemma~\ref{lemma: N=8 main lemma} with $c=1,g,g^2,g^3$. Setting $\zeta=\zeta_8$, $\alpha=\psi_\fp(1+b)^2+\psi_\fp(1-b)^2$, $\chi=\chi(\fP)$, and $S_j=S_{\psi_\fp^j}\big(b^2\big)$ for $j=1,3,5,7$, we have
\begin{gather*}
 S_1+S_3+S_5+S_7=-(\alpha+\overline\alpha)\chi -(\alpha+\overline\alpha)\overline\chi,\\
 \zeta S_1+\zeta^3 S_3+\zeta^5 S_5+\zeta^7S_7 =-\big(\zeta\alpha+\zeta^3\overline\alpha\big)\chi
 -\big(\zeta^5\alpha+\zeta^7\overline\alpha\big)\overline\chi, \\
 iS_1-iS_3+iS_5-iS_7=-(i\alpha-i\overline\alpha)\chi-(i\alpha-i\overline\alpha)\overline\chi, \\
 \zeta^3 S_1+\zeta S_3+\zeta^7S_5+\zeta^5S_7=-(\zeta^3\alpha+\zeta\overline\alpha)\chi
 -\big(\zeta^7\alpha+\zeta^5\overline\alpha\big)\overline\chi.
 \end{gather*}
 From the relations, we deduce that
 \begin{gather*}
 S_1=-\alpha\chi, \qquad S_3=-\overline\alpha\chi, \qquad
 S_5=-\alpha\overline\chi, \qquad S_7=-\overline\alpha\overline\chi.
 \end{gather*}
 By \eqref{equation: prelim 1} and Lemma~\ref{lemma: reduce}, it follows that
 \begin{align*}
 {}_2F_1\(\begin{matrix}
 \phi\psi_\fp& \psi_\fp\\
 & \phi
 \end{matrix};a\)
 &=-\frac{\phi\psi_\fp(-1)}p\ol\psi_\fp\big(1-b^2\big)^2\alpha\chi \\
 &=-\frac1p(-1)^{(p-1)/8}\big(\ol \psi_\fp(1+b)^2 +\ol\psi_\fp(1-b)^2\big)\chi(\fP).
 \end{align*}

This completes the proof of the theorem.
\end{proof}

\section[Case $N=3, 6, 12$]{Case $\boldsymbol{N=3, 6, 12}$}

Using similar arguments as in the cases of $N=4$ and $8$, we can also obtain the evaluations of hypergeometric functions when the order $N$ of the character is $3$, $6$, or~$12$.

For $N=3$, $6$, we let $\zeta_6=e^{2\pi i/6}$ and~$\sigma$ be the nontrivial element in~$\operatorname{Gal}(\Q(\zeta_6)/\Q)$. For a~prime~$p$ congruent to~$1$ modulo $6$ and a prime $\fp$ of $\Z[\zeta_6]$ lying over $p$, we let $\psi_\fp$ be the character of order $6$ in $\wFx$ satisfying $x^{(p-1)/6}\equiv\psi_\fp(x)\mod\fp$ for all $x\in\Z$. Also, let $E$ be the elliptic curve $y^2=x^3+1$. This elliptic curve has CM by~$\Z[\zeta_6]$. We let $\chi$ be the Hecke character of~$\Z[\zeta_6]$ associated to~$E$ satisfying $\chi(\fP)\in\fP$ for all prime ideals~$\fP$ of~$\Z[\zeta_6]$.

Let $C_{6,a,c}\colon y^6= cx^2(x-1)(x-a)$ be the curve in~\eqref{equation: curve C}, where $a$ and $c$ are integers such that $a=b^2$ is square, $a\not\equiv0,1\mod p$, and $c\not\equiv 0\mod p$. Count the number of points of $C_{6,a,c}$ over $\F_p$ in two ways. Firstly, from the equation, we have
\begin{gather} \label{equation: 3 6 1}
 \# C_{6,a,c}(\F_p)=p+{1+2\phi(c)}+\sum_{k=1}^5\psi_\fp(c)^kS_{\psi_\fp^k}(a),
\end{gather}
where for a character $\psi$ of $\F_p^\times$, $S_\psi$ is defined by
\eqref{equation: Seta}. (The points $\infty$ and $(0,0)$ are singular
points on $C_{6,a,c}$. The resolution of singularity at each of the
two points yields $1+\phi(c)$ points. The term $2\phi(c)$ accounts for
the discrepency.)

On the other hand, similar to the case of $N=8$, if~$b$ is suitably chosen, then
 \begin{gather*}
 L\big( C_{6,a,c}/\Q,s\big)=L\big( C_{3,a,c}/\Q,s\big)L\big( E_{+,b,c}/\Q,s\big)L\big( E_{-,b,c}/\Q,s\big)
 \end{gather*}
where
\begin{gather*}
 C_{3,a,c}\colon \ y^3=cx^2(x-1)(x-a)
\end{gather*}
is a curve of genus $2$ and the elliptic curves
\begin{alignat*}{3}
 & E_{+,b,c}\colon \ && cy^2=x^4+c(1+ b)^2x,& \\
 & E_{-,b,c}\colon \ && cy^2=x^4+c(1- b)^2x &
\end{alignat*}
are given in Lemma~\ref{lemma: morphism}. Assume that the $p$-factor
of $L\big(E_{\pm,b,c}/\Q,s\big)$ is $1-u_\pm p^{-s}+p^{1-2s}$ and that of
$L(C_{3,a,c}/\Q,s)$ is $1-vp^{-s}+\cdots$, so that
\begin{gather*} \label{equation: 3 6 2}
 \# C_{6,a,c}(\F_p)=p+1-u_+-u_--v.
\end{gather*}
Counting $\# C_{3,a,c}(\F_p)$, we obtain
\begin{gather*} \label{equation: 3 6 3}
 p+1+\sum_{k=2,4}\psi_\fp(c)^kS_{\psi_\fp^k}(a)=p+1-v.
\end{gather*}
Combining \eqref{equation: 3 6 1}, \eqref{equation: 3 6 2}, \eqref{equation: 3 6 3}, we find
\begin{gather} \label{equation: 3 6 4}
 \sum_{k=1,3,5}\psi_\fp(c)^kS_{\psi_\fp^k}(a)=-u_+-u_--2\phi(c).
\end{gather}
Now for $k=3$, we have $\psi_\fp^3=\phi$ and
\begin{gather*}
 S_{\psi_\fp^3}(a)=\sum_{x\in\F_p}\phi\big(x^2\big)\phi(x-1)\phi(x-a)
 =\sum_{x\neq 0}\phi(x-1)\phi(x-a).
\end{gather*}
Using the fact that $J(\phi,\phi)=-\phi(-1)$, we can show that the sum above is~$-1-\phi(a)$, which, by assumption that $a=b^2$, is equal to~$-2$. Thus, \eqref{equation: 3 6 4} reduces to
\begin{gather} \label{equation: 3 6 5}
 \psi_\fp(c)S_{\psi_\fp}(a)+\ol\psi_\fp(c)S_{\ol\psi_\fp}(a) =-u_{+}-u_{-}.
\end{gather}
Recall from Lemma \ref{lemma: DNc} that
\begin{gather*}
 p+1-u_\pm=p+1+\phi(c)+\sum_{k=1,3,5}\psi_\fp^k\big({-}c(1\pm b)^2\big)
 J\big(\phi,\psi_\fp^k\big).
\end{gather*}
Again, for $k=3$, we have $\psi_\fp^3=\phi$ and
\begin{gather*}
 \psi_\fp^3\big({-}c(1\pm b)^2\big)J\big(\phi,\psi_\fp^k\big)
 =\phi(-c)J(\phi,\phi)=-\phi(c).
\end{gather*}
It follows that
\begin{gather*}
 u_\pm=-\psi_\fp\big({-}c(1\pm b)^2\big)J(\phi,\psi_\fp)
 -\ol\psi_\fp\big({-}c(1\pm b)^2\big)J\big(\phi,\ol\psi_\fp\big).
\end{gather*}
Plugging this into \eqref{equation: 3 6 5}, we see that
 \begin{gather*}
 \psi_\fp(c)S_{\psi_\fp}(a)+\ol \psi_\fp(c)S_{\ol\psi_\fp}(a)
=\psi_\fp(-c)\(\psi_\fp^2(1+b)+\psi_\fp^2(1-b)\)J(\phi,\psi_\fp)\\
\qquad{}
 +\ol\psi_\fp(-c) \(\ol\psi_\fp^2(1+b)+\ol\psi_\fp^2(1-b)\)J\big(\phi,\ol\psi_\fp\big).
 \end{gather*}
By choosing two suitable $c$, one with $\phi_\fp(c)=1$ and one with $\psi_\fp(c)=e^{2\pi i/3}$, we can determine the value of $S_{\psi_\fp}(a)$. We find that
\begin{align*}
 S_{\psi_\fp}(a)& =\psi_\fp(-1)\(\psi_\fp(1+b)^2+\psi_\fp(1-b)^2\)J(\phi,\psi_\fp)\\
& =-\(\psi_\fp(1+b)^2+\psi_\fp(1-b)^2\)\chi(\fp).
\end{align*}
From the discussion in \cite[Chapters~3 and~6]{Berndt-Evans-Williams}, we see $\psi_\fp(-1)J(\phi,\psi_\fp)=J\big(\phi,\psi_\fp^2\big)=-\chi(\fp)$.
Then by Lemma \ref{lemma: reduce},
\begin{gather*}
 \hgq{\phi\psi_\fp^2}{\psi_\fp^2}{\phi}a
 =\frac{\phi\psi_\fp(-1)^2}pS_{\psi_\fp}(a)
 =-\frac{\phi(-1)}p\big(\psi_\fp(1+b)^2+\psi_\fp(1-b)^2\big)\chi(\fp)
\end{gather*}
and
\begin{align*}
 \hgq{\phi\psi_\fp}{\psi_\fp}\phi a
&=\frac{\phi\psi_\fp(-1)}p\ol\psi_\fp(1-a)^2S_{\psi_\fp}(a) \\
&=-\frac1p\ol\psi_\fp\big((1+b)(1-b)^2\big)
 \big(\psi_\fp(1+b)^2+\psi_\fp(1-b)^2\big)\chi(\fp)
\\
&=-\frac1p\big(\ol\psi_\fp(1+b)^2+\ol\psi_\fp(1-b)^2\big)\chi(\fp).
\end{align*}

We now consider the case $N=12$. Let $\zeta_{12}=e^{2\pi i/12}$ and assume that $p$ is a prime congruent to~$1$ modulo~$12$. Let~$\fp$ be a prime of $\Z[\zeta_{12}]$ lying over $p$, and $\psi_\fp$ be the character of \smash{$(\Z[\zeta_{12}]/\fp)^\times\simeq\F_p^\times$} of order $12$ satisfying $x^{(p-1)/12}\equiv\psi_\fp(x)\mod\fp$ for all $x\in\Z[\zeta_{12}]$.

For a generic integer $c$, and rational numbers $a$, $b$ with $a=b^2$, the $L$-function of the curve
\begin{gather*}
 C_{12,a,c}\colon \ y^{12}= cx^5(x-1)(x-a)
\end{gather*}
is equal to
 \begin{gather*}
 L( C_{12,a,c}/\Q,s)=L( C/\Q,s)L( D_{+,b,c}/\Q,s)L( D_{-,b,c}/\Q,s),
 \end{gather*}
where
 \begin{alignat*}{3}
& C\colon \ && y^6=cx^5(x-1)(x-a),&\\
& D_{+,b,c}\colon \ && cy^2=x^7+c(1+b)^2x,& \\
& D_{-,b,c}\colon && cy^2=x^7+c(1-b)^2x,&
 \end{alignat*}
which are of genus $5$, $3$, and $3$, respectively. Furthermore, the curve $D_{\pm,b,c}$ is a $3$-fold cover of the elliptic curve
\begin{gather*}
 E_{\pm,b,c}\colon \ cY^2=X^3+c(1\pm b)^2X
\end{gather*}
with the covering given by
\begin{gather*}
 D_{\pm,b,c} \longrightarrow E_{\pm,b,c},\qquad (x,y)\mapsto\big(x^3, xy\big).
\end{gather*}
Hence, by Lemma \ref{lemma: DNc}, we have
\begin{align} \label{equation: N=12 temp 2}
 \sum_{j=1,5,7,11}{\psi_\fp(c)^j}S_{j}=& \sum_{j=1,5,7,11}\psi_\fp(-c)^jJ\big(\phi,\psi_\fp^j\big)
 \big(\psi_\fp^{2j}(1+b)+\psi_\fp^{2j}(1-b)\big),
\end{align}
where we let $S_i$ denote $S_{\psi_\fp^i}(a)$. The main task here is to
evaluate $J\big(\phi,\psi_\fp^i\big)$.

For $j=1,5,7,11$, let $\sigma_j$ denote the element of~$\operatorname{Gal}(\Q(\zeta_{12})/\Q)$ such that $\sigma(\zeta_{12})=\zeta_{12}^j$ and set
\begin{gather*}
 \fp_j=\sigma_j(\fp).
\end{gather*}
Let also $\fP$ be the prime of $\Z[i]$ lying below $\fp$ and assume that $a+bi$ is a generating element of~$\fP$. Arguing as in the proof of Lemma~\ref{lemma: L of D81}, we find that
\begin{gather*}
 J(\phi,\psi_\fp)=J\big(\phi,\psi_\fp^5\big)\in\fp_1\fp_5=\fP\Z[\zeta_{12}],
 \qquad J\big(\phi,\psi_\fp^7\big)=J\big(\phi,\psi_\fp^{11}\big)\in\fp_7\fp_{11}
 \ol\fP\Z[\zeta_{12}].
\end{gather*}
Since $|J\big(\phi,\psi_\fp^j\big)|=p$, it follows that
\begin{gather*}
 J(\phi,\psi_\fp)=J\big(\phi,\psi_\fp^5\big)=u(a+bi), \qquad
 J\big(\phi,\psi_\fp^7\big)=J\big(\phi,\psi_\fp^{11}\big)=\ol u(a-bi)
\end{gather*}
for some root of unity $u$ in $\Z[\zeta_{12}]$. As in~\eqref{equation: N=8 temp}, we can show that this root of unity must be one of $\pm 1$ and $\pm i$. In particular,
\begin{gather} \label{equation: N=12 temp}
 J(\phi,\psi_\fp)+J\big(\phi,\psi_\fp^5\big)+J\big(\phi,\psi_\fp^7\big)
 +J\big(\phi,\psi_\fp^{11}\big)=\pm 4a \ \text{or} \ \pm 4b.
\end{gather}

On the other hand, consider the hyperelliptic curve $y^2=x^7+x$. We can easily find two morphisms to the two elliptic curves
\begin{alignat*}{3}
 &E_1\colon \ && Y^2=X^3+X, \quad \mbox{with} \ (X,Y)=\big(x^3,xy\big),& \\
 &E_2 \colon \ && Y^2=X^3-3X, \quad \mbox{with} \ (X,Y)=\(\frac{x^2+1}x,\frac
 y{x^2}\). &
\end{alignat*}
Let $\chi_1$ and $\chi$ be the Hecke characters attached to the elliptic curve $E_1$ and~$E_2$, respectively. Then the reciprocal of the Euler $p$-factor of the $L$-function is
\begin{gather*}
 \big(1-\chi_1(\fP)p^{-s}\big)\big(1-\chi_1\big(\ol\fP\big)p^{-s}\big)
 \big(1-\chi(\fP)p^{-s}\big)\big(1-\chi\big(\ol\fP\big)p^{-s}\big) \big(1-tp^{-s}+p^{1-2s}\big)
\end{gather*}
for some integer $t$. Applying Lemma \ref{lemma:
 DNc} to the curves $y^2=x^7+x$ and $y^2=x^3+x$, we obtain
\begin{gather*}
 \psi_\fp(-1)\sum_{j=1,3,5,7,9,11}J\big(\phi,\psi_\fp^j\big)=
 -\chi_1(\fP)-\chi_1\big(\ol\fP\big)-\chi(\fP)-\chi\big(\ol\fP\big)-t,\\
 \psi_\fp(-1)\big(J\big(\phi,\psi_\fp^3\big)+J\big(\phi,\psi_\fp^9\big)\big)
 =-\chi_1(\fP)-\chi_1\big(\ol\fP\big)
\end{gather*}
and consequently
\begin{gather*}
 \psi_\fp(-1)\sum_{j=1,5,7,11}J\big(\phi,\psi_\fp^j\big)
 =-\chi(\fP)-\chi\big(\ol\fP\big)-t.
\end{gather*}
Now the value of $\chi(\fP)$ is one of $\pm a\pm bi$ and $\pm b\pm
ai$ and by~\eqref{equation: N=12 temp}, $\sum_{j=1,5,7,11}J\big(\phi,\psi_\fp^j\big)$ is one of~$\pm 4a$ and~$\pm 4b$.
Therefore, we must have $t=\chi(\fP)+\chi\big(\ol\fP\big)$. It follows that
\begin{gather*}
 \psi_\fp(-1)J(\phi,\psi_\fp)+\chi(\fP)
 =-\psi_\fp(-1)J\big(\phi,\psi_\fp^{11}\big)-\chi\big(\ol\fP\big),
\end{gather*}
which lies in $\fP\ol\fP=p\Z[i]$. Since the absolute values of
$J\big(\phi,\psi_\fp^j\big)$ and $\chi(\fP)$ are all $\sqrt p$, the only
possibility is that
\begin{gather*}
 \psi_\fp(-1)J(\phi,\psi_\fp)=-\chi(\fP), \qquad
 \psi_\fp(-1)J\big(\phi,\psi_\fp^{11}\big)=-\chi\big(\ol\fP\big).
\end{gather*}

Now choose an integer $g$ such that $\psi_\fp(g)=\zeta_{12}$. Set $\zeta=\zeta_{12}$, $\alpha=\psi_\fp(1+b)^2+\psi_\fp(1-b)^2$, and $\chi=\chi(\fP)$. Applying~\eqref{equation: N=12 temp 2} with $c=1,g,g^2,g^3$, we obtain
\begin{gather*}
 S_1+S_5+S_7+S_{11}=-(\alpha+\overline\alpha)\chi -(\alpha+\ol\alpha)\ol\chi, \\
 \zeta S_1+\zeta^5 S_5+\zeta^7S_7+\zeta^{11}S_{11}=-\big(\zeta\alpha+\zeta^5\ol\alpha\big)\chi
 -\big(\zeta^7\alpha+\zeta^{11}\ol\alpha\big)\ol\chi, \\
 \zeta^{2}S_1+\zeta^{10}S_5+\zeta^{2}S_7+\zeta^{10}S_{11}
 =-\big(\zeta^2\alpha+\zeta^{10}\ol\alpha\big)\chi -\big(\zeta^2\alpha+\zeta^{10}\ol\alpha\big)\ol\chi, \\
 iS_1+iS_5-iS_7-iS_{11}=-(i\alpha+i\ol\alpha)\chi +(i\alpha+i\ol\alpha)\ol\chi.
\end{gather*}
From these identities, we deduce that
 \begin{gather*}
 S_1=- \alpha\chi, \qquad S_5=-\ol \alpha\chi, \qquad
 S_7=- \alpha\overline\chi, \qquad S_{11}=-\ol\alpha\overline\chi.
 \end{gather*}
Using Lemma \ref{lemma: reduce}, we finally arrive at
\begin{align*}
 \hgq{\phi\psi_\fp}{\psi_\fp}\phi a
&=\frac{\phi\psi_\fp(-1)}p\ol\psi_\fp(1-a)^2 S_{\psi_\fp}(a) \\
&=-\frac{(-1)^{(p-1)/12}}p\ol\psi_\fp(1-a)^2
 \big(\psi_\fp(1+b)^2+\psi_\fp(1-b)^2\big)\chi(\fP) \\
&=-\frac{(-1)^{(p-1)/12}}p
 \big(\ol\psi_\fp(1+b)^2+\ol\psi_\fp(1-b)^2\big)\chi(\fP).
\end{align*}
This proves Theorem~\ref{theorem: N=12}.

 \subsection*{Acknowledgements}
 The authors would like to thank the National Center for Theoretical Sciences (NCTS) in Taiwan. Tu was supported in part by the NCTS where this project was initiated. Yang was partially supported by Grant 102-2115-M-009-001-MY4 of the Ministry of Science and Technology, Taiwan (R.O.C.). We also thank the Banff International Research Station in Canada for the workshop on Modular Forms on String Theory and the International Mathematical Research Institute MATRIX in Australia for the workshop on Hypergeometric Motives and Calabi--Yau Differential Equations, where we further collaborated on this work. We are very grateful to Ling Long for her interest and enlightening discussions. The authors would like to thank the referees who provided many helpful comments.

\pdfbookmark[1]{References}{ref}
\LastPageEnding

\end{document}